\documentclass[11pt]{article}
\usepackage{amsmath}
\usepackage{amsfonts}
\usepackage{amssymb}
\usepackage{multirow}
\usepackage{mathrsfs}
\usepackage[pagewise]{lineno}%\linenumbers
\usepackage{textcomp}
\usepackage{tikz}

\usepackage{algorithmic}
\usepackage{algorithm}

\usepackage{xcolor}
\usepackage{graphicx}
\usepackage{float}
\usepackage{booktabs}
\usepackage{longtable}
\usepackage{placeins}
\usepackage{booktabs,adjustbox}
\usepackage{array}
\usepackage{caption}
\renewcommand{\Box}{\framebox{\rule{0.3em}{0.0em}}}

\newtheorem{thm}{Theorem}[section]
\newtheorem{lema}{Lemma}[section]

\newtheorem{ex}{Example}[section]

\newtheorem{defi}{Definition}[section]

\newcommand{\z}{{ z}}

\newcommand{\bgeqn}{\begin{eqnarray}}
\newcommand{\edeqn}{\end{eqnarray}}
\newcommand{\bgeq}{\begin{eqnarray*}}
\newcommand{\edeq}{\end{eqnarray*}}
\newcommand{\bec}{\begin{center}}
\newcommand{\enc}{\end{center}}

\newcommand{\be}{\begin{equation}}
\newcommand{\ee}{\end{equation}}

\newcommand{\ol}{\overline}

\newtheorem{remark}{Remark}[section]
\renewcommand{\Box}{\hfill \rule{2.3mm}{2.3mm}}

\makeatletter

\@addtoreset{equation}{section} \makeatother
\newenvironment{proof}{\noindent{\bf Proof. }}{\hfill $\Box$\medskip}

\title{\Large \bf
Augmented Lagrangian Method for Mathematical Programs with Second-Order Cone Complementarity Constraints\thanks{\baselineskip 9pt
This work was supported in part by the National Natural Science Foundation of China (Grant No. 12571324) and Natural Science Foundation of Henan Province (Grant No. 262300421849) and Henan Provincial Selective Research Funding Program for Returned Scholars Studying Abroad
(Grant No. HNLX202609).%
}
}
\author{
Yan-Chao Liang\thanks{\baselineskip 9.5pt School of Mathematics and statistics, Henan Normal University, Xinxiang 453007, China. E-mail: liangyanchao83@163.com.},  \
Chen-Yuan Zhu\thanks{\baselineskip 9.5pt School of Mathematics and Statistics, Henan Normal University, Xinxiang 453007, China. E-mail: zhuchenyuan08@163.com.}, \
Sheng-Jie Zhang\thanks{\baselineskip 9.5pt School of Mathematics and statistics, Henan Normal University, Xinxiang 453007, China. E-mail: 2201183015@stu.htu.edu.cn.}, \
Gui-Hua Lin\thanks{\baselineskip 9.5pt School of Management, Shanghai University, Shanghai 200444, China. E-mail:  guihualin@shu.edu.cn.}
\ and
Xide Zhu\thanks{\baselineskip 9.5pt Corresponding author, School of Management, Shanghai University, Shanghai 200444, China. E-mail: xidezhu@shu.edu.cn.}}
\date{}

\begin{document}
\maketitle

{\vspace{13pt} \noindent{\bf Abstract.}
This paper investigates mathematical programs with second-order cone complementarity constraints (SOCMPCCs), which extend classical mathematical programs with complementarity constraints (MPCCs) by incorporating second-order cone structures. SOCMPCCs present significant theoretical and computational challenges, primarily due to the failure of standard constraint qualifications (such as Robinson's constraint qualification) at all feasible points. This difficulty hinders the direct application of classical nonlinear programming theories and algorithms. Motivated by the success of the augmented Lagrangian method (ALM) in solving MPCCs, we explore its extension to SOCMPCCs. The ALM, known for its matrix-free implementation and strong local convergence properties, is well suited for handling the intricate interplay between complementarity and second-order cone constraints. In this paper, we propose a tailored ALM algorithm framework for SOCMPCCs and establish its feasibility and convergence properties. We show that, under bounded ALM penalty parameters or bounded augmented Lagrangian functions, the generated sequence converges to feasible points of the SOCMPCC. Furthermore, under feasibility and additional SOCMPCC-nondegeneracy condition, we prove convergence to K-stationary points, which constitute a fundamental optimality condition for SOCMPCCs. Numerical experiments, including both illustrative examples and high-dimensional problems, are conducted to demonstrate the effectiveness and practical applicability of the proposed algorithm in addressing the challenges inherent in SOCMPCCs.

\vspace{13pt}\noindent{\bf Keywords.}
Mathematical programs with second-order cone complementary constraints, augmented Lagrangian method, K-stationary point, SOCMPCC-nondegenerate condition.

\noindent{\bf 2020 Mathematics Subject Classification.} 90C30,  90C33.

\section{Introduction}
Mathematical programs with complementarity constraints (MPCCs) arise in a wide range of applications, including engineering design, economic equilibrium modeling, transportation science, and multi-level game theory; see \cite{Luo Pang Ralph,Outrata Kocvara Zowe} and the references therein. When MPCCs are regarded as nonlinear programs with equality and inequality constraints, the standard Mangasarian–Fromovitz constraint qualification (MFCQ) fails to hold at any feasible point \cite{Ye Zhu Zhu}.
This poses significant challenges for directly applying classical nonlinear programming theories and algorithms.
Second-order cone programs (SOCPs) are another important optimization problems, with applications in antenna array design, finite impulse response filter design, and portfolio optimization \cite{F. Alizadeh and D. Goldfarb}.
Second-order cone complementarity problems (SOCCPs) generalize classical complementarity problems and includes the Karush–Kuhn–Tucker (KKT) conditions of SOCPs as a special case. SOCCPs have received considerable attention in the literature; see \cite{Chen Chen Tseng,Chen-Sun-Sun,fukushima-luo-tseng2001,Hayashi-Yamashita-Fukushima2005} and the survey in \cite{Chen-Pan2012} for recent developments.

In this paper, we investigate mathematical programs with second-order cone complementarity constraints (SOCMPCCs), which extend classical MPCCs by incorporating second-order cone constraints. As shown in \cite{Y.-C. Liang Y.-W. Liu}, SOCMPCCs have important applications, including anti-quadratic programming over second-order cones, robust bilevel programming, and reformulations of MPCCs. However, Robinson's constraint qualification for SOCMPCCs fails at all feasible points \cite{Ye Zhou }.
Liang et al. \cite{Liang Zhu Lin } provided expressions for the regular and limiting normal cones of the second-order cone complementarity set, although gaps exist in their treatment, particularly at boundary points. These gaps were later addressed by Ye and Zhou \cite{Ye JJ Zhou J, Ye Zhou }, who established exact characterizations of these normal cones and derived various necessary optimality conditions under suitable constraint qualifications. They showed that unlike in MPCCs, the classical KKT conditions for SOCMPCCs generally do not imply strong stationarity, except when the dimension of each cone is at most two. They also showed that reformulating MPCCs as SOCMPCCs can also lead to weaker necessary conditions.
Using Jordan algebra techniques, Zhu et al. \cite{Zhu Zhang Zhou Yang } proposed an alternative formulation of SOCMPCC and showed that the classical KKT conditions are consistent with strong stationarity under this reformulation. They also introduced an approximation method that ensures Clarke stationarity under a linear independence constraint qualification, which can be strengthened to strong stationarity under a strict complementarity condition.

Several researchers have proposed smoothing methods to handle SOCMPCCs. Yan and Fukushima \cite{Yan Fukushima } developed a smoothing method for mathematical programs with symmetric cone complementarity constraints, including SOCMPCCs. Yamamura et al. \cite{Yamamura Okuno Hayashi Fukushima } analyzed linear SOCMPCCs and proposed a smoothing SQP method based on reformulation via natural residual functions. Zhu et al. \cite{zhu-pang-lin} and \cite{Zhu Zhang Zhou Yang } presented further smoothing and approximation strategies, demonstrating convergence to stationary points under appropriate conditions. {
Liang et al. \cite{Y.-C. Liang Y.-W. Liu} proposed several new constraint qualifications for SOCMPCCs, including the SOCMPCC relaxed constant positive linear dependence condition and other constant rank-type constraint qualifications. These new conditions are strictly weaker than the SOCMPCC linear independence constraint qualification and the nondegeneracy condition. Furthermore, the relationships among various existing constraint qualifications for SOCMPCCs are systematically investigated.}

The augmented Lagrangian method (ALM) was first proposed by Hestenes \cite{Hestenes M-R} and Powell \cite{Powell M J D} for equality-constrained optimization problems and was later extended to nonlinear programming with inequality constraints by Rockafellar \cite{Rockafellar1970}.
Shapiro and Sun \cite{Shapiro} studied the properties of the augmented Lagrangian in cone-constrained optimization problems.
ALM can be implemented in a matrix-free manner and possesses strong local convergence guarantees.
These features make it particularly effective for solving large-scale problems.
Various variants of the classical ALM have been developed, including the protected ALM \cite{Andreani Birgin Martinez Schuverdt ,Birgin Martinez ,Kanzow Steck }.
In recent years, ALM has been applied to MPCCs and SOCPs.
Yang and Huang \cite{Yang Huang } proposed an ALM for MPCCs using the {Fischer-Burmeister} function.
Huang et al. \cite{Huang Yang Teo } applied a partial ALM that incorporates the complementarity constraints into the objective function.
Andreani et al. \cite{Andrean Secchin Silva } employed a second-order ALM, originally designed for nonlinear programming, to address MPCCs.
Guo and Deng \cite{Guo Lei and Z Deng } proposed a method that excludes the complementarity constraints from the augmented Lagrangian function and instead handles them directly using a projection-gradient approach.
Liu and Zhang \cite{Liu-zhang-2007} established the convergence of ALM for SOCPs.
Building on this, Hang et al. \cite{Hang2021} used second-order sufficiency to establish a uniform second-order growth condition for ALM in SOCPs.
Furthermore, Andreani et al. \cite{Andreani-MOR2022} introduced new sequential optimality conditions within a general nonlinear conic programming framework.
They demonstrated that feasible limit points of sequences generated by ALM satisfy the so-called approximate gradient projection optimality condition and, under an additional smoothness assumption, the complementary approximate KKT condition.

ALM has demonstrated high efficiency in solving MPCCs, which motivates us to explore its potential application to SOCMPCCs.
Given its successful application to MPCCs, it is natural to consider extending ALM to the more complex setting of SOCMPCCs.
SOCMPCCs not only involve the complementarity constraints present in MPCCs but also incorporate second-order cone constraints, thereby increasing the problem's complexity and difficulty. Nevertheless, the strengths of ALM in handling complex constraint structures make its application to SOCMPCCs a promising direction for further investigation.

The main contributions of this paper are summarized as follows:\\[-6mm]
\begin{itemize}
\item We propose a tailored ALM algorithm framework for SOCMPCCs and establish its feasibility and convergence properties. In particular, we show that, under bounded ALM penalty parameters or bounded augmented Lagrangian functions, the generated sequence converges to feasible points of the original SOCMPCC.\\[-6mm]

\item {We further show that, when ALM penalty parameters are unbounded, every limit point of the generated sequence is a stationary point of an associated unconstrained optimization problem. Under the feasibility and additional SOCMPCC-nondegeneracy conditions, we establish convergence to K-stationary points of the SOCMPCC.}\\[-6mm]

\item Numerical experiments, including both illustrative examples and high-dimensional test problems, demonstrate that the proposed ALM exhibits improved solution accuracy compared with existing smoothing methods and is effective for large-scale SOCMPCCs.
\end{itemize}

The rest of this paper is organized as follows.
Section 2 presents fundamental concepts and results related to SOCPs and SOCMPCCs.
In Section 3, we propose the algorithm framework of ALM for SOCMPCCs, and provide an analysis of the feasibility and convergence of the proposed algorithm.
Section 4 reports numerical experiments conducted to validate the effectiveness of the algorithm.
Section 5 concludes the paper.

%%%%%%%%%%%%%%%%%%%%%%%%%%%%%%%%%%%%%%%%%
%%%%%%%%%%%%%%%%%%%%%%%%%%%%%%%%%%%%%%%%%
%%%%%%%%%%%%%%%%%%%%%%%%%%%%%%%%%%%%%%%%%
%%%%%%%%%%%%%%%%%%%%%%%%%%%%%%%%%%%%%%%%%
%%%%%%%%%%%%%%%%%%%%%%%%%%%%%%%%%%%%%%%%%

\section{Preliminaries}

Throughout this paper, we adopt the following notations, which are standard in optimization and variational analysis.
Let $\mathbb{N}$ denote the set of natural numbers (with $0 \in \mathbb{N}$), and let $\mathbb{R}^n$ represent the $n$-dimensional real Euclidean space.
We define $\mathbb{R}_+ := [0, \infty)$ and $\mathbb{R}_{++} := (0, \infty)$.
For any vector $a \in \mathbb{R}^n$, we write $a_+ := \max\{a, 0\}$ and $a_- := \min\{a, 0\}$, where the `$\max$' and `$\min$' operators are understood componentwise.
For a vector $x = (x_1, \bar{x}) \in \mathbb{R} \times \mathbb{R}^{m-1}$, its reflection is defined as $\hat{x} = (x_1, -\bar{x}) \in \mathbb{R} \times \mathbb{R}^{m-1}$.
For a differentiable mapping $f(x) : \mathbb{R}^n \rightarrow \mathbb{R}^m$, we denote its Jacobian matrix by { $\mathcal{J} f(x)$, and the transpose of the Jacobian by $\nabla f(x)$.}
We denote by $\mathbb{E}$ a finite-dimensional linear space equipped with an inner product, denoted by $\langle \cdot, \cdot \rangle$.
Given a closed convex set $C \subset \mathbb{R}^n$, the polar of $C$ is defined as $C^\circ := \{ \omega \in \mathbb{R}^n : \langle \omega, x \rangle \leq 0,\ \forall x \in C \}$, and ${(C^\circ)}^\circ = C$ holds.
The distance from a point $\omega \in \mathbb{R}^n$ to $C$ is denoted by $\text{dist}_{C}(\omega) := \min\{ \| \omega - x \| : x \in C \}$, and the orthogonal projection of $\omega$ onto $C$, denoted by $\Pi_{C}(\omega)$, is the point in $C$ at which this minimum is attained.

Let ${\cal K}$ be a single second-order cone of dimension $m$. The topological interior and boundary of ${\cal K}$, denoted by ${\rm int}{{\cal K}}$ and ${\rm bd}{\cal K}$ respectively, are defined as
\begin{eqnarray*}
&&{\rm int}{\cal K}:=\{x=(x_1,\ol{x})\in \mathbb{R}\times\mathbb{R}^{m-1} |\ x_1>\| \ol{x}\| \},\\
&&{\rm bd}{\cal K}:=\{x=(x_1,\ol{x})\in \mathbb{R}\times\mathbb{R}^{m-1} |\ x_1=\| \ol{x}\| \}.
\end{eqnarray*}
It is well known that ${\cal K}^\circ=-\cal K$. In \cite{fukushima-luo-tseng2001}, a formula for the projection onto the cone ${\cal K}$ is given.
Following \cite{fukushima-luo-tseng2001}, every $x=(x_1,\ol{x})\in \mathbb{R}\times\mathbb{R}^{m-1}$ can be decomposed as
\begin{eqnarray*}
x=\mu_1(x)c_1(x)+\mu_2(x)c_2(x),
\end{eqnarray*}
where $\mu_l\in\mathbb{R}$ and $c_l\in{\cal K}$ for $l\in\{1,2\}$ are given by
\begin{eqnarray*}
\mu_l(x)=x_1+(-1)^l||\bar{x}||,~~c_l(x)=\left\{
\begin{aligned}
\tfrac{1}{2}(1,(-1)^l\tfrac{\bar{x}}{||\bar{x}||}), &~~~{\rm if }~~ \bar{x}\neq0,\\
\tfrac{1}{2}(1,(-1)^l\omega),~~~&~~~{\rm if}~~ \bar{x}=0,
\end{aligned}
\right.
\end{eqnarray*}
where $\omega$ is any unit vector in $\mathbb{R}^{m-1}$.
Then, for every $x=(x_1,\ol{x})\in\mathbb{R}\times\mathbb{R}^{m-1}$, the projection onto ${\cal K}$ is given by
$$
\Pi_{{\cal K}}(x):=\max\{\mu_1(x),0\}c_1(x)+\max\{\mu_2(x),0\}c_2(x).
$$
Using the identity ${\cal K}^\circ = -{\cal K}$, we obtain the projection onto the polar cone:
\begin{eqnarray*}
\Pi_{{\cal K}^\circ}(x):=\min\{\mu_1(x),0\}c_1(x)+\min\{\mu_2(x),0\}c_2(x).
\end{eqnarray*}

In this paper, we study the following mathematical program with second-order cone complementarity constrains, denoted by {SOCMPCC}:
\begin{eqnarray}\label{SOCMPCC}
\min   & & f(z) \nonumber\\
{\rm SOCMPCC}~~~~\qquad\qquad\mbox{s.t.} & & g(z)\le 0,  \ h(z)=0,   \nonumber\qquad\qquad\qquad\qquad\qquad\qquad\\
&& G(z)\in {\cal K},\ H(z)\in {\cal K},\\
&&G(z)^T H(z)=0,\nonumber
\end{eqnarray}
where
$f:\mathbb{R}^n\rightarrow \mathbb{R}, ~g:\mathbb{R}^n\rightarrow \mathbb{R}^p,~h:\mathbb{R}^n\rightarrow \mathbb{R}^q,~G:\mathbb{R}^n\rightarrow \mathbb{R}^m,~H:\mathbb{R}^n\rightarrow \mathbb{R}^m$ are all continuously differentiable mappings,
${\cal K}\subset{\mathbb{R}^m}$ is a nonempty closed convex cone defined as
$$
\mathcal{K}:= \mathcal{K}_{1}\times \mathcal{K}_{2}\times\cdots\times \mathcal{K}_{J},
$$
where $m=\sum\limits_{i=1}^{J}m_i$ and $\mathcal{K}_{i}$ is an $m_i$-dimensional second-order cone defined by
\begin{eqnarray*}
\mathcal{K}_{i}:=\big\{(x_1,\ol{x})\in \mathbb{R}\times\mathbb{R}^{m_i-1}\ |\  x_1\geq\| \ol{x}\|\big\},
\end{eqnarray*}
where $\|\cdot\|$ denotes the Euclidean norm, $G:=(G_1,\cdots,G_J)$ and $H(z):=(H_1,\cdots,H_J)$ with $G_i\in \mathcal{K}_{i} $ and $H_i\in \mathcal{K}_{i}$ for $i=1,\cdots,J$.
For $\nu=(\nu_1,\nu_2,\cdots,\nu_J)\in{\mathbb{R}^m}$ with $\nu_i\in \mathbb{R}\times\mathbb{R}^{m_i-1}$, and with reference to the cone $\cal K$ in (\ref{SOCMPCC}), we have
$$
\Pi_{{\cal K}}(\nu)=(\Pi_{{\cal K}_1}(\nu_1),\cdots,\Pi_{{\cal K}_J}(\nu_J)),~~\Pi_{{\cal K}^\circ}(\nu)=(\Pi_{{\cal K}_1^\circ}(\nu_1),\cdots,\Pi_{{\cal K}_J^\circ}(\nu_J)).
$$

The following lemmas related to the second-order cone complementarity will be used.
\begin{lema}{\rm \cite{fukushima-luo-tseng2001}}
For any $x \in{\cal K}$ and $y \in{\cal K}$, we have
\begin{eqnarray*}
x^Ty=0  \ \Leftrightarrow\  x_j^Ty_j=0,\ j=1,\cdots,J.
\end{eqnarray*}
\end{lema}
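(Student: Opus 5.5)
The equivalence rests on two facts: each block product $x_j^Ty_j$ is nonnegative when $x_j,y_j\in{\cal K}_j$, and a sum of nonnegative terms vanishes only if every term does. Here $x=(x_1,\dots,x_J)$ and $y=(y_1,\dots,y_J)$ are partitioned conformally with ${\cal K}={\cal K}_1\times\cdots\times{\cal K}_J$, so $x_j,y_j\in{\cal K}_j$ and
$$x^Ty=\sum_{j=1}^J x_j^Ty_j .$$

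The reverse direction is immediate. If $x_j^Ty_j=0$ for every $j$, the sum above is zero, so $x^Ty=0$.

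For the forward direction, I first show that each second-order cone is self-dual in the weak sense that $u^Tv\ge 0$ for all $u,v\in{\cal K}_j$. Write $u=(u_1,\bar u)$ and $v=(v_1,\bar v)$ with $u_1\ge\|\bar u\|$ and $v_1\ge\|\bar v\|$. By the Cauchy--Schwarz inequality,
$$u^Tv=u_1v_1+\bar u^T\bar v\ \ge\ u_1v_1-\|\bar u\|\,\|\bar v\|\ \ge\ \|\bar u\|\,\|\bar v\|-\|\bar u\|\,\|\bar v\|=0.$$
The second inequality uses that $u_1,v_1$ are nonnegative and dominate $\|\bar u\|,\|\bar v\|$ respectively. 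If $m_j=1$ the cone is just $\mathbb{R}_+$ and nonnegativity is trivial. Alternatively, one can cite ${\cal K}_j^\circ=-{\cal K}_j$, which the paper states: it says exactly that $\langle u,v\rangle\le 0$ whenever $u\in -{\cal K}_j$ and $v\in{\cal K}_j$.

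Now suppose $x^Ty=0$. Then $\sum_j x_j^Ty_j=0$ with every summand $x_j^Ty_j\ge 0$, so each summand must be zero. That proves $x_j^Ty_j=0$ for $j=1,\dots,J$.

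The only step needing any argument is the nonnegativity of the inner product on a single cone, and the Cauchy--Schwarz estimate above handles it.
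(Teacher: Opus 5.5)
Your proof is correct and is the standard argument. The paper gives no proof of its own (it cites Fukushima, Luo and Tseng), but the only ingredient needed is the self-duality fact $u^Tv\ge 0$ for $u,v\in{\cal K}_j$, which the paper itself invokes at the start of Section~3; combined with $x^Ty=\sum_j x_j^Ty_j$ as you use it, and with your sound Cauchy--Schwarz verification of that nonnegativity, nothing is missing.
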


\begin{lema}{\rm\cite{Liang Zhu Lin }}\label{lema2.2}
For any $x \in{\rm bd}{\cal K}_i\backslash\{0\}$ and $y \in{\rm bd}{\cal K}_i\backslash\{0\}$, we have
\begin{eqnarray*}
x^Ty=0\ \Leftrightarrow \ x=k\hat{y}\ \ \mbox{with}\ k\in\mathbb{R}_{++}.
\end{eqnarray*}
\end{lema}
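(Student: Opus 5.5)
We prove the two implications separately, working with the notation $x=(x_1,\ol{x})$, $y=(y_1,\ol{y})$ in $\mathbb{R}\times\mathbb{R}^{m_i-1}$. Since $x,y\in{\rm bd}{\cal K}_i\backslash\{0\}$, we have $x_1=\|\ol{x}\|>0$ and $y_1=\|\ol{y}\|>0$; in particular $\ol{x}\neq 0$ and $\ol{y}\neq 0$. Recall that $\hat{y}=(y_1,-\ol{y})$.

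The direction ``$\Leftarrow$'' is a direct computation. If $x=k\hat{y}$ with $k>0$, then
\[
x^Ty=k(y_1^2-\|\ol{y}\|^2)=0,
\]
because $y_1=\|\ol{y}\|$.

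For the direction ``$\Rightarrow$'', suppose $x^Ty=0$, so that
\[
x_1y_1+\ol{x}^T\ol{y}=0 .
\]
The Cauchy--Schwarz inequality gives
\[
\ol{x}^T\ol{y}\ge -\|\ol{x}\|\,\|\ol{y}\|=-x_1y_1 .
\]
Our hypothesis says $\ol{x}^T\ol{y}=-x_1y_1$, so equality holds in Cauchy--Schwarz. Since $\ol{x}$ and $\ol{y}$ are nonzero, this forces $\ol{x}=-k\ol{y}$ for some $k>0$. Comparing norms then gives $k=\|\ol{x}\|/\|\ol{y}\|=x_1/y_1$, and hence $x_1=ky_1$.

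Combining the two component identities, $x=(ky_1,-k\ol{y})=k\hat{y}$ with $k\in\mathbb{R}_{++}$.

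The only delicate step is the equality case of Cauchy--Schwarz. It yields the \emph{negative} multiple, and hence a positive $k$, precisely because the inner product attains its minimum value $-\|\ol{x}\|\,\|\ol{y}\|$. The exclusion of $0$ from both vectors is essential at this point: it guarantees that $\ol{x}$ and $\ol{y}$ are nonzero, so that $k$ is well defined and strictly positive.
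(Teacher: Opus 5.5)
Your proof is correct and complete. You use that $x_1=\|\bar{x}\|>0$ and $y_1=\|\bar{y}\|>0$ (so $\bar{x},\bar{y}\neq 0$), and the equality case $\bar{x}^T\bar{y}=-\|\bar{x}\|\,\|\bar{y}\|$ of Cauchy--Schwarz then gives $\bar{x}=-k\bar{y}$ with $k=x_1/y_1>0$, hence $x=k\hat{y}$. The converse is the one-line computation $k(y_1^2-\|\bar{y}\|^2)=0$.

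The paper does not prove this lemma itself; it only cites Liang--Zhu--Lin. So there is no in-paper argument to compare against, and your direct computation is a valid self-contained proof of the cited fact with nothing missing.
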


The following two properties related to projections onto ${\cal K}$ are very important.

\begin{lema}{\rm \cite{Andreani Haeser Schuverdt Silva }}\label{lema3}
{\rm (}Moreau's decomposition{\rm)} For every $\omega\in{\mathbb{R}^m}$, we have
    $$\omega=\Pi_{\cal K}(\omega)+\Pi_{{\cal K}^\circ}(\omega) ~~\text{and}~~ \langle\Pi_{\cal K}(\omega),\Pi_{{\cal K}^\circ}(\omega)\rangle=0.$$
\end{lema}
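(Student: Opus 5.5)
The plan is to verify Moreau's decomposition directly from the spectral formulas for $\Pi_{\cal K}$ and $\Pi_{{\cal K}^\circ}$ recalled above. It suffices to work with a single cone ${\cal K}_i$, since both projections act blockwise and the inner product on $\mathbb{R}^m$ is the sum of the blockwise inner products. So I fix $\omega=(\omega_1,\bar\omega)\in\mathbb{R}\times\mathbb{R}^{m_i-1}$ and write $\omega=\mu_1c_1+\mu_2c_2$ with $\mu_l=\mu_l(\omega)$ and $c_l=c_l(\omega)$. When $\bar\omega=0$ I use the same unit vector $w$ in both $c_1$ and $c_2$.

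For the first identity, I use $\max\{t,0\}+\min\{t,0\}=t$ for every real $t$. Applying it to each $\mu_l$ gives
$$\Pi_{{\cal K}_i}(\omega)+\Pi_{{\cal K}_i^\circ}(\omega)=\sum_{l=1}^2\big(\max\{\mu_l,0\}+\min\{\mu_l,0\}\big)c_l=\mu_1c_1+\mu_2c_2=\omega.$$
The last equality is the spectral decomposition, which I check directly: $\mu_1c_1+\mu_2c_2=\tfrac12\big(\mu_1+\mu_2,\,(\mu_2-\mu_1)u\big)$, where $u=\bar\omega/\|\bar\omega\|$, or $u=w$ if $\bar\omega=0$. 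This equals $(\omega_1,\|\bar\omega\|u)=\omega$.

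For orthogonality, I compute the Gram matrix of $c_1,c_2$. Since $\|u\|=1$, we get $\|c_l\|^2=\tfrac14(1+1)=\tfrac12$ and $\langle c_1,c_2\rangle=\tfrac14(1-1)=0$. Therefore
$$\langle\Pi_{{\cal K}_i}(\omega),\Pi_{{\cal K}_i^\circ}(\omega)\rangle=\tfrac12\sum_{l=1}^2\max\{\mu_l,0\}\min\{\mu_l,0\}=0,$$
because $\max\{t,0\}\min\{t,0\}=0$ for every $t$; the cross terms vanish by $\langle c_1,c_2\rangle=0$.

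Summing these identities over the blocks $i=1,\dots,J$ gives both statements for ${\cal K}$. There is no real obstacle here. The only point needing care is the degenerate case $\bar\omega=0$, where one must choose the same $w$ in $c_1$ and $c_2$ so that both the decomposition and the orthogonality $\langle c_1,c_2\rangle=0$ still hold. Alternatively, one could cite the general Moreau theorem for a closed convex cone and its polar, using ${\cal K}^\circ=-{\cal K}$.
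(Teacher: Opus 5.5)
Your proof is correct, but it is not the paper's route. The paper gives no argument; it cites Moreau's decomposition theorem, which holds for an arbitrary closed convex cone and its polar and uses nothing specific to second-order cones. Your closing alternative is exactly what the paper does. Your main argument instead verifies the identity blockwise, by direct computation from the spectral formulas for $\Pi_{{\cal K}_i}$ and $\Pi_{{\cal K}_i^\circ}$. The computation checks out: you use $\mu_1c_1+\mu_2c_2=(\omega_1,\|\bar\omega\|u)=\omega$, the Gram relations $\|c_l\|^2=\tfrac12$ and $\langle c_1,c_2\rangle=0$, and the identities $\max\{t,0\}+\min\{t,0\}=t$ and $\max\{t,0\}\min\{t,0\}=0$.

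Your insistence on using the same unit vector $w$ in $c_1$ and $c_2$ when $\bar\omega=0$ is genuinely needed, not just cosmetic. With different choices $w_1,w_2$ you get $\langle c_1,c_2\rangle=\tfrac14(1-\langle w_1,w_2\rangle)\neq 0$. Worse, for $\omega=(\omega_1,0)$ with $\omega_1>0$ the formula would return $\omega_1\bigl(1,\tfrac12(w_2-w_1)\bigr)\neq\Pi_{{\cal K}_i}(\omega)$.

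What each route buys: the cited theorem is general and needs no explicit projection formula. Your argument is elementary and concrete: it shows the two pieces sit on the orthogonal pair $c_1,c_2$ with sign-split coefficients. Its cost is that it is specific to (products of) second-order cones and takes the Fukushima--Luo--Tseng projection formula as given.

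Even that dependence is harmless. Your computation also shows $\sum_l\max\{\mu_l,0\}c_l\in{\cal K}$ and $\sum_l\min\{\mu_l,0\}c_l\in{\cal K}^\circ$, since $c_l\in{\cal K}$. Combined with the characterization in Lemma~\ref{lema4}, and its analogue with the roles of ${\cal K}$ and ${\cal K}^\circ$ swapped, this confirms independently that the spectral formulas are the metric projections.
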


It follows from Lemma \ref{lema3} that for all $\omega\in{\mathbb{R}^m}$, we have
\begin{eqnarray*}
\langle \omega-\Pi_{\cal K}(\omega),\Pi_{\cal K}(\omega)\rangle=0.
\end{eqnarray*}

\begin{lema}{\rm \cite{Rockafellar}}\label{lema4}
$\upsilon = \Pi_{\mathcal{K}^\circ}(\omega)$ \ $\Leftrightarrow$ \
$\upsilon \in \mathcal{K}^\circ$, $\omega - \upsilon \in \mathcal{K}$, $\langle \omega - \upsilon, \upsilon \rangle = 0$.
\end{lema}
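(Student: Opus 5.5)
We prove the two implications separately, working in the decomposition $\mathbb{R}^m=\mathbb{R}^{m_1}\times\cdots\times\mathbb{R}^{m_J}$. Since $\mathcal{K}$, $\mathcal{K}^\circ=-\mathcal{K}$ and both projections act blockwise, a block-by-block argument would also work. The plan below avoids block formulas and uses only the projection characterization of a closed convex cone together with Lemma \ref{lema3}.

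\emph{Forward implication.} Let $\upsilon=\Pi_{\mathcal{K}^\circ}(\omega)$.
\begin{itemize}
\item By definition of the projection, $\upsilon\in\mathcal{K}^\circ$.
\item By Moreau's decomposition (Lemma \ref{lema3}), $\omega-\upsilon=\Pi_{\mathcal{K}}(\omega)\in\mathcal{K}$.
\item Lemma \ref{lema3} also gives $\langle \Pi_{\mathcal{K}}(\omega),\Pi_{\mathcal{K}^\circ}(\omega)\rangle=0$, which is exactly $\langle\omega-\upsilon,\upsilon\rangle=0$.
\end{itemize}

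\emph{Reverse implication.} Assume $\upsilon\in\mathcal{K}^\circ$, $\omega-\upsilon\in\mathcal{K}$ and $\langle\omega-\upsilon,\upsilon\rangle=0$. We show that $\upsilon$ minimizes $\|\omega-x\|^2$ over $x\in\mathcal{K}^\circ$. For any $x\in\mathcal{K}^\circ$,
$$\|\omega-x\|^2=\|\omega-\upsilon\|^2+\|\upsilon-x\|^2+2\langle\omega-\upsilon,\upsilon-x\rangle.$$
The cross term satisfies
$$\langle\omega-\upsilon,\upsilon-x\rangle=0-\langle\omega-\upsilon,x\rangle\ge 0,$$
because $\omega-\upsilon\in\mathcal{K}=(\mathcal{K}^\circ)^\circ$ and $x\in\mathcal{K}^\circ$, so $\langle\omega-\upsilon,x\rangle\le 0$. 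Here we use the bipolar identity $(C^\circ)^\circ=C$ recalled in the preliminaries.

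Hence $\|\omega-x\|^2\ge\|\omega-\upsilon\|^2+\|\upsilon-x\|^2$, with equality only if $x=\upsilon$. So $\upsilon$ is the unique minimizer, i.e., $\upsilon=\Pi_{\mathcal{K}^\circ}(\omega)$.

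The only step needing care is the sign of the cross term. It is settled by applying the polar relation in the correct direction: $\mathcal{K}$ is the polar of $\mathcal{K}^\circ$. Nothing specific to second-order cones is needed, so the argument holds for any closed convex cone.
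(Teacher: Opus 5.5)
Your proof is correct. The paper gives no argument of its own for this lemma; it only cites Rockafellar, where the statement is part of Moreau's decomposition theorem for a closed convex cone and its polar. Your route is elementary and self-contained within the paper. The forward implication is a direct rereading of Lemma~\ref{lema3}, together with $\Pi_{\mathcal{K}}(\omega)\in\mathcal{K}$. The reverse implication is the standard Pythagorean estimate $\|\omega-x\|^2\ge\|\omega-\upsilon\|^2+\|\upsilon-x\|^2$ for $x\in\mathcal{K}^\circ$, which also gives uniqueness of the minimizer.

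There are two small remarks on where the real content sits. First, in the reverse direction you do not need the bipolar theorem. The inequality $\langle\omega-\upsilon,x\rangle\le 0$ for $x\in\mathcal{K}^\circ$ and $\omega-\upsilon\in\mathcal{K}$ follows directly from the definition of $\mathcal{K}^\circ$; only the trivial inclusion $\mathcal{K}\subseteq(\mathcal{K}^\circ)^\circ$ is used. So the sign of the cross term needs no special care.

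Second, the nontrivial step is the membership $\omega-\upsilon\in\mathcal{K}$ in the forward direction, which you take from Lemma~\ref{lema3}. That is legitimate given the paper's ordering. If you wanted a proof independent of Moreau's decomposition, you would argue as follows:
\begin{itemize}
\item Start from the projection inequality $\langle\omega-\upsilon,x-\upsilon\rangle\le 0$ for all $x\in\mathcal{K}^\circ$.
\item Test it with $x=0$ and $x=2\upsilon$ to obtain $\langle\omega-\upsilon,\upsilon\rangle=0$.
\item Deduce $\langle\omega-\upsilon,x\rangle\le 0$ for all $x\in\mathcal{K}^\circ$.
\item Only now invoke $(\mathcal{K}^\circ)^\circ=\mathcal{K}$ to conclude $\omega-\upsilon\in\mathcal{K}$.
\end{itemize}
The last step is where closedness and convexity of $\mathcal{K}$ genuinely enter.
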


Given a feasible point $z^*$ of {SOCMPCC}, we define some useful index sets as follows
\begin{eqnarray*}\renewcommand{\arraystretch}{1.5}
\begin{array}{ll}
I_g(z^*):=\{i\ |\ g_i(z^*)=0\},&  C(z^*):=\{i\ |\ G_i(z^*)=l\hat{H}_i(z^*),\ l\in\mathbb{R_{++}}\},\\
I_G(z^*):=\{i\ |\ G_i(z^*)=0\},& I_G^+(z^*):=\{i\ |\ G_i(z^*)\in{\rm int}\mathcal{K}_{m_i}\},\\
I_H(z^*):=\{i\ |\ H_i(z^*)=0\},& I_H^+(z^*):=\{i\ |\ H_i(z^*)\in{\rm int}\mathcal{K}_{m_i}\},\\
B_G(z^*):=\{i\ |\ G_i(z^*)\in{\rm bd}\mathcal {K}_{m_i}\backslash\{0\}\},& B_H(z^*):=\{i\ |\ H_i(z^*)\in{\rm bd}\mathcal {K}_{m_i}\backslash\{0\}\}.
\end{array}
\end{eqnarray*}
We further define the following composite index sets:
\begin{eqnarray*}\renewcommand{\arraystretch}{1.5}
\begin{array}{ll}
I_1(z^*):=I_G(z^*)\cap I_H^+(z^*),& I_2(z^*):=I_H(z^*)\cap I_G^+(z^*),\\
I_3(z^*):=B_G(z^*)\cap B_H(z^*)\cap C(z^*),& I_4(z^*):=I_G(z^*)\cap B_H(z^*), \\
I_5(z^*):=B_G(z^*)\cap I_H(z^*),& I_6(z^*):=I_G(z^*)\cap I_H(z^*),\\
I(z^*):=I_1(z^*)\cup I_2(z^*)\cup\cdots\cup I_6(z^*),& I^c(z^*):=\{1,\cdots,J\}\backslash I(z^*),
\end{array}
\end{eqnarray*}
where for simplicity, we denote $I_k(z^*)$ by $I_k^*$ for $k=1,\cdots, 6$.

In the following, we introduce the constraint qualification and stationarity condition for SOCMPCC that will be used throughout this paper.

\begin{defi}{\rm\cite{Peng Roos Terlaky}}\label{SOCMPCC-nondegenerate}
We say that the {\em SOCMPCC nondegenerate condition} holds at a feasible point $z^*$ of SOCMPCC if the following vectors are linearly independent:
\begin{eqnarray*}\renewcommand{\arraystretch}{1.5}
\begin{array}{ll}
\nabla g_i(z^*),& i\in I_g(z^*),\\
\nabla h_i(z^*),& i=1,\cdots,q,\\
\nabla G_{ij}(z^*),& i\in I_1^*\cup I_4^*\cup I_6^*,\ j=1,\cdots,m_i,\\
\nabla H_{ij}(z^*),& i\in I_2^*\cup I_5^*\cup I_6^*,\ j=1,\cdots,m_i,\\
\nabla\phi_i^G(z^*):=\nabla G_i(z^*)\hat{G}_i(z^*),& i\in I_3^*\cup I_5^*,\\
\nabla\phi_i^H(z^*):=\nabla H_i(z^*)\hat{H}_i(z^*),& i\in I_3^*\cup I_4^*.
\end{array}
\end{eqnarray*}
\end{defi}

\begin{defi}{\rm\cite{Ye Zhou }}\label{K-point}
We say that a feasible point $z^*$ of SOCMPCC is {\rm K-stationary} if there exist multipliers $(\lambda,\mu,\alpha,\beta)$ satisfying
\begin{eqnarray}
\nabla f(z^*)+\nabla g(z^*)\lambda+\nabla h(z^*)\mu+\sum\limits_{i=1}^J\nabla G_i(z^*)\alpha_i+\sum\limits_{i=1}^J\nabla H(z^*)\beta_i=0,
\end{eqnarray}
where
\begin{eqnarray}
\left\{\begin{array}{ll}
\lambda\geq 0,\ g(z^*)^T\lambda=0,\\
\beta_i=0,& i\in I_1(z^*),\\
\alpha_i=0,& i\in I_2(z^*),\\
\alpha_i\in\mathbb{R}\hat{G}_i(z^*),\ \beta_i\in\mathbb{R}\hat{H}_i(z^*),& i\in I_3(z^*),\\
\alpha_i\in -{\cal K}_{m_i}+\mathbb{R}_+H_i(z^*),\ \beta_i\in\mathbb{R}_-\hat{H}_i(z^*),& i\in I_4(z^*),\\
\alpha_i\in\mathbb{R}_-\hat{G}_i(z^*),\ \beta_i\in -{\cal K}_{m_i}+\mathbb{R}_+G_i(z^*),& i\in I_5(z^*),\\
\alpha_i\in-{\cal K}_{m_i},\ \beta_i\in-{\cal K}_{m_i},& i\in I_6(z^*).
\end{array}\right.
\end{eqnarray}
\end{defi}

\section{ALM Algorithm framework for SOCMPCC}

Note that for each $i = 1, \cdots, J$, if $G_i(z) \in \mathcal{K}_i$ and $H_i(z) \in \mathcal{K}_i$, then $G_i(z)^T H_i(z) \geq 0$ holds due to the self-duality of second-order cones.
Therefore, the SOCMPCC can be equivalently reformulated as an optimization problem with convex cone constraints, denoted by K-SOCMPCC:
\begin{eqnarray}\label{K-SOCMPCC}
\min   & & f(z) \nonumber\\
{\rm K\!-\!SOCMPCC}~~~~\qquad\qquad\mbox{s.t.} & & g(z)\le 0,  \ h(z)=0,   \nonumber\qquad\qquad\qquad\qquad\qquad\qquad\\
&& G(z)\in {\cal K},\ H(z)\in {\cal K},\\
&&G(z)^T H(z)\leq0.\nonumber
\end{eqnarray}
where $G(z) = (G_1(z), \cdots, G_J(z))$, $H(z) = (H_1(z), \cdots, H_J(z))$, and $\mathcal{K} = \mathcal{K}_1 \times \cdots \times \mathcal{K}_J$.

Given a {penalty} parameter $\rho > 0$, the augmented Lagrangian function associated with the SOCMPCC, denoted by $L_{\rho}$, is defined as
\begin{eqnarray*}
\begin{aligned}
L_{\rho}(z,\lambda,\mu,\eta,\Lambda,\Gamma) \,=\, &f(z)+\tfrac{\rho}{2}\vert\vert(g(z)+\tfrac{\lambda}{\rho})_+\vert\vert^2+\tfrac{\rho}{2}\vert\vert h(z)+\tfrac{\mu}{\rho}||^2\\
&+\tfrac{\rho}{2} \Big(\vert\vert\Pi_{{\cal K}^\circ}(G(z)+\tfrac{\Lambda}{\rho})||^2-||\tfrac{\Lambda}{\rho}||^2\Big)+\tfrac{\rho}{2} \Big(\vert\vert\Pi_{{\cal K}^\circ}(H(z)+\tfrac{\Gamma}{\rho})\vert\vert^2-\vert\vert\tfrac{\Gamma}{\rho}||^2\Big)\\
&+\tfrac{\rho}{2}\vert\vert(\langle G(z),H(z)\rangle +\tfrac{\eta}{\rho})_+\vert\vert^2,
\end{aligned}
\end{eqnarray*}
{where $\lambda$, $\mu$, $\eta$, $\Lambda$ and $\Gamma$ denote the Lagrange multipliers associated with the inequality, equality, complementarity,and cone constraints, respectively.}

The gradient of $L_{\rho}$ with respect to $z$ is given by
\begin{eqnarray}\label{deriv}
\nabla_z L_\rho(z,\lambda,\mu,\eta,\Lambda,\Gamma)
&\!\!\!=\!\!\!&\nabla f(z)+\nabla g(z)(\lambda+\rho g(z))_++\nabla h(z)(\mu+\rho h(z))\nonumber\\
&&+\nabla G(z)\big(\rho\Pi_{{\cal K}^\circ}(G(z)+\tfrac{\Lambda}{\rho})\big)+\nabla H(z)\big(\rho\Pi_{{\cal K}^\circ}(H(z)+\tfrac{\Gamma}{\rho})\big)\nonumber\\
&&+\nabla (G(z)^TH(z))(\eta+\rho\langle G(z),H(z)\rangle )_+.
\end{eqnarray}

\subsection{Algorithm framework and feasibility}
Based on the reformulation K-SOCMPCC, we now present a tailored algorithmic framework of the augmented Lagrangian method (ALM) for solving the SOCMPCC.

\begin{algorithm}[H]
\caption{ALM algorithm for SOCMPCC}
\label{alg:socmpcc}
\begin{algorithmic}[0] % Number of each line

\STATE
Let $\{\epsilon_k\}$ be a nonincreasing sequence of positive real numbers converging to zero. {Let $\varepsilon$ be a termination precision.}
Let $B \subseteq \mathcal{K}^\circ$ be a nonempty, convex, and compact set. 
Choose parameters $\tau > 1$, $\sigma \in (0, 1)$, $\rho^0 > 0$, $\lambda_{\max} > 0$, $\mu_{\min} < \mu_{\max}$, $\eta_{\min} < \eta_{\max}$.
Initialize multipliers:
$\bar{\lambda}_i^0 \in [0, \lambda_{\max}]$ for $i = 1, \dots, p$,
$\bar{\mu}_i^0 \in [\mu_{\min}, \mu_{\max}]$ for $i = 1, \dots, q$,
$\bar{\eta}^0 \in [\eta_{\min}, \eta_{\max}]$,
and cone multipliers $\bar{\Lambda}^0, \bar{\Gamma}^0 \in B$.

\STATE \textbf{Step 1.}
For each $k > 0$, project $(\lambda^k, \mu^k, \eta^k)$ onto
$$
\bigotimes_{i=1}^p [0, \lambda_{\max}] \times \bigotimes_{i=1}^q [\mu_{\min}, \mu_{\max}] \times [\eta_{\min}, \eta_{\max}]
$$
to obtain $(\bar{\lambda}^k, \bar{\mu}^k, \bar{\eta}^k)$.
Compute $z^k$ such that
$$
\|\nabla_z L_{\rho}(z^k,\bar{\lambda}^k,\bar{\mu}^k,\bar{\eta}^k,\bar{\Lambda}^k,\bar{\Gamma}^k)\| \le \epsilon_k.
$$

\STATE \textbf{Step 2.} { For $k=1,2,\dots$, if $\|z^k-z^{k-1}\|\leq \varepsilon$, then stop.} Otherwise, update multipliers:
\begin{eqnarray*}
&&\lambda^{k+1}=(\bar{\lambda}^k+\rho^kg(z^k))_+,~~\mu^{k+1}=\bar{\mu}^k+\rho^kh(z^k),~~\eta^{k+1}=(\bar{\eta}^k+\rho^k\langle G(z^k),H(z^k)\rangle)_+,\\
&&{{\Lambda}}^k=\rho^k\Pi_{{\cal K}^\circ}(G(z^k)+\tfrac{\bar{\Lambda}^k}{\rho^k}),~~{{\Gamma}}^k=\rho^k\Pi_{{\cal K}^\circ}(H(z^k)+\tfrac{\bar{\Gamma}^k}{\rho^k}),~~ {\bar{\Lambda}}^{k+1}=\Pi_B(\Lambda^k),\\
&&{\bar{\Gamma}}^{k+1}=\Pi_B(\Gamma^k),~~\zeta_{k+1}=\min\{\lambda^{k+1},-g(z^k)\},~~\delta_{k+1}=\min\{\eta^{k+1},-\langle G(z^k),H(z^k)\rangle\},\\
&&V^k=\tfrac{\bar{\Lambda}^k}{\rho^k}-\Pi_{{\cal K}^\circ}(G(z^k)+\tfrac{\bar{\Lambda}^k}{\rho^k}),~~
U^k=\tfrac{\bar{\Gamma}^k}{\rho^k}-\Pi_{{\cal K}^\circ}(H(z^k)+\tfrac{\bar{\Gamma}^k}{\rho^k}).
\end{eqnarray*}

\STATE \textbf{Step 3.}
If $k = 0$ or the following two conditions hold:
\begin{eqnarray}
  &&\max\{||\zeta_{k+1}||_{\infty},|\delta_{k+1}|,||h(z^k)||_{\infty}\}\le\sigma \max\{||\zeta_{k}||_{\infty},|\delta_{k}|,||h(z^{k-1})||_{\infty}\}, \label{2.3}\\
&&||V^k||\le\sigma||V^{k-1}||, ~~||U^k||\le\sigma||U^{k-1}||,\label{2.4}
\end{eqnarray}
set $\rho^{k+1} = \rho^k$. Otherwise, update the penalty parameter as $\rho^{k+1} = \tau \rho^k$, and return to {\bf Step 1}.

\end{algorithmic}
\end{algorithm}

Theorem \ref{thm1} establishes the feasibility properties for the proposed Algorithm \ref{alg:socmpcc}.

\begin{thm}\label{thm1}
Let $\{z^k\}$ be a sequence generated by Algorithm \ref{alg:socmpcc}. Suppose that $z^*$ is a limit point of $\{z^k\}$.
If the penalty parameter sequence $\{\rho^k\}$ is bounded, or there exists a constant $\Upsilon>0$ such that
\begin{eqnarray}\label{2.6}
L_{\rho^k}(z^k,{\bar{\lambda}}^k,{\bar{\mu}}^k,{\bar{\eta}}^k,{\bar{\Lambda}}^k,{\bar{\Gamma}}^k)\le\Upsilon,&&\forall ~k,
\end{eqnarray}
then $z^*$ is feasible to the SOCMPCC.
\end{thm}
\begin{proof}
By taking a subsequence if necessary, we may assume $z^k \to z^*$. We analyze the feasibility by considering two separate cases based on whether the sequence $\{\rho^k\}$ is bounded.

Case 1: The sequence $\{\rho^k\}$ is bounded.
Then, the penalty parameters are not updated beyond some iteration index $k_0 > 0$, and we may assume $\rho^k \to \rho^*$. As a result, the update condition in (\ref{2.3}) holds for all $k > k_0$. From the definitions of $\zeta_{k+1}$ and $\delta_{k+1}$ in Step 2 of Algorithm \ref{alg:socmpcc}, we obtain
$$
\lim_{k \to \infty} h(z^k) = 0, \quad
\lim_{k \to \infty} g(z^k) \le -\lim_{k \to \infty} \zeta_{k+1} = 0, \quad
\lim_{k \to \infty} \langle G(z^k), H(z^k) \rangle \le -\lim_{k \to \infty} \delta_{k+1} = 0.
$$
Taking the limit as $k \to \infty$, we have
$$
h(z^*) = 0, \quad g(z^*) \le 0, \quad \langle G(z^*), H(z^*) \rangle \le 0.
$$

Next, from \eqref{2.4} and the definition of $V^k$, we have
$$
\lim_{k \to \infty} \left( \tfrac{\bar{\Lambda}^k}{\rho^k} - \Pi_{\mathcal{K}^\circ}\left( G(z^k) + \tfrac{\bar{\Lambda}^k}{\rho^k} \right) \right) = 0.
$$
Since $\bar{\Lambda}^k \in B$ and $B$ is a convex and compact set, the sequence $\left\{ \tfrac{\bar{\Lambda}^k}{\rho^k} \right\}$ is bounded. Using the nonexpansiveness of the projection operator, we deduce that both sequences
$$
\tfrac{\bar{\Lambda}^k}{\rho^k} \quad \text{and} \quad \Pi_{\mathcal{K}^\circ} \left( G(z^k) + \tfrac{\bar{\Lambda}^k}{\rho^k} \right)
$$
have convergent subsequences. Denote the limits by
$$
\tfrac{\bar{\Lambda}^k}{\rho^k} \to \tfrac{\bar{\Lambda}^*}{\rho^*}, \quad \Pi_{\mathcal{K}^\circ} \left( G(z^k) + \tfrac{\bar{\Lambda}^k}{\rho^k} \right) \to \Pi_{\mathcal{K}^\circ} \left( G(z^*) + \tfrac{\bar{\Lambda}^*}{\rho^*} \right).
$$
Therefore,
$$
\tfrac{\bar{\Lambda}^*}{\rho^*} = \Pi_{\mathcal{K}^\circ} \left( G(z^*) + \tfrac{\bar{\Lambda}^*}{\rho^*} \right),
$$
which implies $\tfrac{\bar{\Lambda}^*}{\rho^*} \in \mathcal{K}^\circ$.

By Lemma \ref{lema3}, we have
$$
\left\langle G(z^*) + \tfrac{\bar{\Lambda}^*}{\rho^*} - \Pi_{\mathcal{K}^\circ} \left( G(z^*) + \tfrac{\bar{\Lambda}^*}{\rho^*} \right), \Pi_{\mathcal{K}^\circ} \left( G(z^*) + \tfrac{\bar{\Lambda}^*}{\rho^*} \right) \right\rangle = 0.
$$
Substituting the identity above, we conclude
$$
\langle G(z^*), \tfrac{\bar{\Lambda}^*}{\rho^*} \rangle = 0.
$$
Then, by Lemma \ref{lema4}, it follows that $G(z^*) \in \mathcal{K}$. By analogous reasoning, we also have $H(z^*) \in \mathcal{K}$. Hence, $z^*$ is feasible to SOCMPCC.

Case 2: The sequence $\{\rho^k\}$ is unbounded.
Since $\rho^k$ is nondecreasing, we must have $\rho^k \to \infty$. From condition (\ref{2.6}), we have
\begin{eqnarray*}
    \tfrac{2(\Upsilon-f(z^k))}{\rho^k} &\geq &||\Pi_{{\cal K}^\circ}(G(z^k)+\tfrac{\bar{\Lambda}^k}{\rho^k})||^2-\vert\vert\tfrac{\bar{\Lambda}^k}{\rho^k}\vert\vert^2 +{   ||\Pi_{{\cal K}^\circ}(H(z^k)+\tfrac{\bar{\Gamma}^k}{\rho^k})||^2}-\vert\vert\tfrac{\bar{\Gamma}^k}{\rho^k}\vert\vert^2\\
    &&+\vert\vert(g(z^k)+\tfrac{\bar{\lambda}^k}{\rho^k})_+\vert\vert^2+\vert\vert h(z^k)+\tfrac{\bar{\mu}^k}{\rho^k}\vert\vert^2 + \vert\vert({   \langle G(z^k),H(z^k)\rangle}+\tfrac{\bar{\eta}^k}{\rho^k})_+\vert\vert^2.
     \end{eqnarray*}
Passing to the limit along a convergent subsequence and using $\rho^k \to \infty$, each term on the left-hand side tends to zero, which implies the feasibility of $z^*$.

Thus, in both cases, $z^*$ is feasible to SOCMPCC.
This completes the proof.
\end{proof}

\subsection{Convergence analysis}

Consider the following unconstrained optimization problem:

\begin{eqnarray}\label{2.8}
\underset{}{\min} ~~ \tfrac{1}{2}\Big(||g(z)_+||^2+||h(z)||^2+||\langle G(z),H(z)\rangle_+||^2+||\Pi_{{\cal K}^\circ}(G(z))||^2+||\Pi_{{\cal K}^\circ}(H(z))||^2\Big).
\end{eqnarray}

Theorem \ref{thm2} shows that when the penalty parameter sequence is unbounded, any limit point of the sequence generated by the proposed ALM is a stationary point of problem \eqref{2.8}.

\begin{thm}\label{thm2}
If the penalty parameter sequence $\{\rho^k\}$ is unbounded, then any limit point of a sequence $\{z^k\}$ generated by Algorithm \ref{alg:socmpcc} is a stationary point of \eqref{2.8}.
\end{thm}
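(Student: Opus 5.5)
The plan is the standard ALM infeasibility argument: divide the approximate stationarity condition of Step 1 by $\rho^k$ and pass to the limit. First compute the gradient of the objective $\Phi$ of \eqref{2.8}. Since $\tfrac12\|\Pi_{{\cal K}^\circ}(w)\|^2=\tfrac12\,{\rm dist}_{\cal K}(w)^2$ is continuously differentiable with gradient $\Pi_{{\cal K}^\circ}(w)$ (by Lemma \ref{lema3}, $w-\Pi_{\cal K}(w)=\Pi_{{\cal K}^\circ}(w)$), and $\tfrac12 (t)_+^2$ has derivative $t_+$, we get
\begin{eqnarray*}
\nabla\Phi(z)&\!\!=\!\!&\nabla g(z)g(z)_++\nabla h(z)h(z)+\nabla(G(z)^TH(z))\langle G(z),H(z)\rangle_+\\
&&+\nabla G(z)\Pi_{{\cal K}^\circ}(G(z))+\nabla H(z)\Pi_{{\cal K}^\circ}(H(z)).
\end{eqnarray*}
The goal is therefore to show $\nabla\Phi(z^*)=0$ for any limit point $z^*$.

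Take a subsequence $z^k\to z^*$. Because $\{\rho^k\}$ is nondecreasing and unbounded, $\rho^k\to\infty$. From Step 1 and formula \eqref{deriv}, with the safeguarded multipliers $(\bar\lambda^k,\bar\mu^k,\bar\eta^k,\bar\Lambda^k,\bar\Gamma^k)$, we have $\|\nabla_z L_{\rho^k}(z^k,\cdot)\|\le\epsilon_k$. Dividing by $\rho^k$ gives
\begin{eqnarray*}
&&\Big\|\tfrac{\nabla f(z^k)}{\rho^k}+\nabla g(z^k)\big(g(z^k)+\tfrac{\bar\lambda^k}{\rho^k}\big)_++\nabla h(z^k)\big(h(z^k)+\tfrac{\bar\mu^k}{\rho^k}\big)+\nabla G(z^k)\Pi_{{\cal K}^\circ}\big(G(z^k)+\tfrac{\bar\Lambda^k}{\rho^k}\big)\\
&&\quad+\nabla H(z^k)\Pi_{{\cal K}^\circ}\big(H(z^k)+\tfrac{\bar\Gamma^k}{\rho^k}\big)+\nabla(G(z^k)^TH(z^k))\big(\langle G(z^k),H(z^k)\rangle+\tfrac{\bar\eta^k}{\rho^k}\big)_+\Big\|\le\tfrac{\epsilon_k}{\rho^k}.
\end{eqnarray*}

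Next, pass to the limit term by term. The safeguarded multipliers lie in fixed compact sets ($[0,\lambda_{\max}]^p$, $[\mu_{\min},\mu_{\max}]^q$, $[\eta_{\min},\eta_{\max}]$, and $B$), so each quotient $\bar\cdot^k/\rho^k\to0$. Also $\nabla f(z^k)/\rho^k\to0$ by continuity, and $\epsilon_k/\rho^k\to0$. The maps $(\cdot)_+$ and $\Pi_{{\cal K}^\circ}$ are continuous; indeed the projection is nonexpansive, so
$$\|\Pi_{{\cal K}^\circ}(G(z^k)+\bar\Lambda^k/\rho^k)-\Pi_{{\cal K}^\circ}(G(z^*))\|\le\|G(z^k)-G(z^*)\|+\|\bar\Lambda^k\|/\rho^k\to0.$$
Combined with the continuity of $\nabla g,\nabla h,\nabla G,\nabla H$ and of $\nabla(G^TH)=\nabla G\,H+\nabla H\,G$, each term converges to the corresponding term of $\nabla\Phi(z^*)$. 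Hence $\nabla\Phi(z^*)=0$.

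The only step needing care is the differentiability of the squared-projection terms and the identification of their gradients. I would justify this via Moreau's decomposition, Lemma \ref{lema3}: $\|\Pi_{{\cal K}^\circ}(w)\|={\rm dist}_{\cal K}(w)$, and the squared distance to a closed convex set is $C^1$ with gradient $2(w-\Pi_{\cal K}(w))$. The rest is routine continuity.
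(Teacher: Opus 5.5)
Your proposal is correct and takes essentially the same route as the paper: compute the gradient of the objective of \eqref{2.8}, divide the Step~1 approximate-stationarity relation by $\rho^k$, and pass to the limit using $\rho^k\to\infty$ and the boundedness of the safeguarded multipliers. You also justify two points the paper leaves implicit: that $\tfrac12\|\Pi_{{\cal K}^\circ}(\cdot)\|^2=\tfrac12{\rm dist}_{\cal K}(\cdot)^2$ is $C^1$ with gradient $\Pi_{{\cal K}^\circ}(\cdot)$ (via Lemma~\ref{lema3}), and that the quotients $\bar\lambda^k/\rho^k,\ldots,\bar\Gamma^k/\rho^k$ vanish because the safeguarded multipliers lie in fixed compact sets.
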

\begin{proof}
The gradient of the objective function in \eqref{2.8} is
\begin{eqnarray}\label{2.7}
\begin{aligned}
&\nabla g(z)(g(z))_++\nabla h(z)h(z)+\nabla (G(z)^TH(z))(\langle G(z),H(z)\rangle)_+\\
&+{ \nabla G(z)\Pi_{{\cal K}^\circ}(G(z))+\nabla H(z)\Pi_{{\cal K}^\circ}(H(z)).}
\end{aligned}
\end{eqnarray}

Assume, without loss of generality, that $z^k\to z^*$ along a subsequence.
Since $\{\rho^k\}$ is unbounded, we have $\rho^k\to\infty$ as $k\to\infty$.
By Step 1 of Algorithm \ref{alg:socmpcc} and \eqref{deriv}, for each \(k\), there exists \(\pi^k\) such that \(\|\pi^k\| \leq \epsilon_k\) and
\begin{eqnarray}\label{2.9}
\begin{aligned}
\pi^k ~=~ &\nabla f(z^k)+\nabla g(z^k)({\bar{\lambda}}^k+{\rho}^k g(z^k))_++\nabla h(z^k)({\bar{\mu}}^k+{\rho}^k h(z^k))\\
&+{ \nabla G(z^k)({\rho}^k\Pi_{{\cal K}^\circ}(G(z^k)+\tfrac{{\bar{\Lambda}}^k}{{\rho}^k}))+\nabla H(z^k)({\rho}^k\Pi_{{\cal K}^\circ}(H(z^k)+\tfrac{{\bar{\Gamma}}^k}{{\rho}^k}))}\\
&+\nabla (G(z^k)^TH(z^k))({\bar{\eta}}^k+{\rho}^k\langle G(z^k),H(z^k)\rangle )_+.
\end{aligned}
\end{eqnarray}

Since \(\epsilon_k \to 0\), it follows that \(\pi^k \to 0\) as \(k \to \infty\). Dividing \eqref{2.9} by \(\rho^k\) yields
\begin{eqnarray*}
\begin{aligned}
\tfrac{\pi^k}{{\rho}^k} ~=~ &\tfrac{\nabla f(z^k)}{{\rho}^k}+\nabla g(z^k)(\tfrac{{\bar{\lambda}}^k}{{\rho}^k}+ g(z^k))_++\nabla h(z^k)(\tfrac{{\bar{\mu}}^k}{{\rho}^k}+ h(z^k))\\
&+{ \nabla G(z^k)\Pi_{{\cal K}^\circ}(G(z^k)+\tfrac{{\bar{\Lambda}}^k}{{\rho}^k})+\nabla H(z^k)\Pi_{{\cal K}^\circ}(H(z^k)+\tfrac{{\bar{\Gamma}}^k}{{\rho}^k})}\\
&+\nabla (G(z^k)^TH(z^k))(\tfrac{{\bar{\eta}}^k}{{\rho}^k}+\langle G(z^k),H(z^k)\rangle )_+,
\end{aligned}
\end{eqnarray*}
Taking the limit as \(k \to \infty\), we obtain
\begin{eqnarray*}
\begin{aligned}
0~=~ &\nabla g(z^*)(g(z^*))_++\nabla h(z^*)h(z^*)
+\nabla (G(z^*)^TH(z^*))(\langle G(z^*),H(z^*)\rangle)_+\\
&+{ \nabla G(z^*)\Pi_{{\cal K}^\circ}(G(z^*))+\nabla H(z^*)\Pi_{{\cal K}^\circ}(H(z^*)).}
\end{aligned}
\end{eqnarray*}
This confirms that \(z^*\) is indeed a stationary point of \eqref{2.8}. This completes the proof.
\end{proof}

The following Lemma demonstrates that the K-stationary condition of SOCMPCC is equivalent to the KKT condition of (\ref{K-SOCMPCC}). Moreover, the K-stationary condition can be reformulated as a structured system, which will be instrumental in our subsequent analysis.

\begin{lema}{\rm\cite{Y.-C. Liang Y.-W. Liu}}\label{lem1}
The K-stationarity condition of SOCMPCC is equivalent to the KKT condition of {K-SOCMPCC}.
Specifically, the K-stationarity condition can be expressed as
\begin{eqnarray}
&&\nabla f(z^*)+\nabla g(z^*)\lambda+\nabla h(z^*)\mu-\sum\limits_{i\in I_1^*\cup I_4^*\cup I_6^*}\nabla G_i(z^*)\alpha_i-\sum\limits_{i\in I_2^*\cup I_5^*\cup I_6^*}\nabla H_i(z^*)\beta_i\nonumber\\
&&-\sum\limits_{i\in I_3^*\cup I_5^*}\tilde{\alpha}_i\nabla\phi_i^G(z^*)-\sum\limits_{i\in I_3^*\cup I_4^*}\tilde{\beta}_i\nabla\phi_i^H(z^*)=0,\label{eqkstationary1}
\end{eqnarray}
where
\begin{eqnarray}
\left\{\begin{array}{ll}
\lambda\geq 0,\ g(z^*)^T\lambda=0,\\
\tilde{\alpha}_i\in\mathbb{R} ,\ \tilde{\beta}_i\in\mathbb{R} ,& i\in I_3^*,\\
\alpha_i\in {{\cal K}_{i}}+\mathbb{R}_-H_i(z^*),\ \tilde{\beta}_i\in\mathbb{R}_+,& i\in I_4^*,\\
\tilde{\alpha}_i\in\mathbb{R}_+,\ \beta_i\in {{\cal K}_{i}}+\mathbb{R}_-G_i(z^*),& i\in I_5^*,\\
\alpha_i\in{{\cal K}_{i}},\beta_i\in{{\cal K}_{i}},& i\in I_6^*.
\end{array}\right.\label{eqkstationary2}
\end{eqnarray}
\end{lema}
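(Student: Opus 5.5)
The plan is to prove both directions blockwise, since the K-stationarity system in Definition \ref{K-point} and the KKT system of K-SOCMPCC share the terms $\nabla f+\nabla g\lambda+\nabla h\mu$ and differ only in how the $G_i,H_i$ multipliers are parametrized. The KKT system of K-SOCMPCC at a feasible $z^*$ reads
\begin{eqnarray*}
\nabla f(z^*)+\nabla g(z^*)\lambda+\nabla h(z^*)\mu-\nabla G(z^*)u-\nabla H(z^*)v+\eta\big(\nabla G(z^*)H(z^*)+\nabla H(z^*)G(z^*)\big)=0,
\end{eqnarray*}
with $u,v\in{\cal K}$, $\langle u,G(z^*)\rangle=\langle v,H(z^*)\rangle=0$, $\eta\ge0$, $\lambda\ge 0$, $g(z^*)^T\lambda=0$. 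Since $\nabla(G^TH)$ splits blockwise, we set $\alpha_i=-u_i+\eta H_i(z^*)$ and $\beta_i=-v_i+\eta G_i(z^*)$. The equivalence then reduces to showing, index set by index set, that the set of pairs $(-u_i+\eta H_i,\,-v_i+\eta G_i)$, with $u_i,v_i\in{\cal K}_i$ complementary to $G_i,H_i$, coincides with the multiplier set of Definition \ref{K-point}. There is one subtlety: the K-stationarity conditions have no common scalar $\eta$, so for the direction K-stationary $\Rightarrow$ KKT we must choose a single $\eta$ large enough to serve every block.

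The blockwise work uses the standard facts: if $G_i\in{\rm int}{\cal K}_i$ then $u_i=0$; if $G_i\in{\rm bd}{\cal K}_i\setminus\{0\}$ then $u_i\in\mathbb{R}_+\hat G_i$; and if $G_i=0$ then $u_i$ ranges over ${\cal K}_i$. For $I_1$, where $G_i=0$ and $H_i$ is interior, we get $v_i=0$ and $\eta G_i=0$, so $\beta_i=0$. For $I_3$, Lemma \ref{lema2.2} gives $H_i=k\hat G_i$, so $\alpha_i=-u_i+\eta H_i\in\mathbb{R}\hat G_i$. Conversely, any $\alpha_i=t\hat G_i$ can be written as $-s\hat G_i+\eta k\hat G_i$ with $s\ge0$ once $\eta$ is large. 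For $I_4$ we get $\alpha_i\in-{\cal K}_i+\mathbb{R}_+H_i$ and $\beta_i=-v_i\in\mathbb{R}_-\hat H_i$; $I_5$ is symmetric. For $I_6$ all $\eta$ terms vanish, leaving $\alpha_i,\beta_i\in-{\cal K}_i$. The reverse inclusions require splitting a given $\alpha_i\in-{\cal K}_i+\mathbb{R}_+H_i$ as $-w+tH_i$ and taking $\eta\ge t$; the surplus $(\eta-t)H_i$ is then absorbed into $-u_i$ via $-w-(\eta-t)H_i$. This works because $-(\eta-t)H_i\in-{\cal K}_i$ and $u_i$ is unconstrained when $G_i=0$. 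Taking $\eta$ as the maximum over all blocks of the required thresholds completes this direction.

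For the structured form \eqref{eqkstationary1}--\eqref{eqkstationary2}, we rewrite each term. When $\alpha_i\in\mathbb{R}\hat G_i$, we have $\nabla G_i\alpha_i=t\nabla G_i\hat G_i=t\nabla\phi_i^G$, which produces the $\tilde\alpha_i$ terms on $I_3^*\cup I_5^*$, and similarly $\tilde\beta_i$ on $I_3^*\cup I_4^*$. The sign flips to $\mathbb{R}_+$ and ${\cal K}_i+\mathbb{R}_-H_i$ come from the minus signs in \eqref{eqkstationary1}. Indices in $I_2$ contribute only through $\beta_i$, indices in $I_1$ only through $\alpha_i$, and blocks in $I^c$ do not occur by feasibility and Lemma \ref{lema2.2}.

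The main obstacle is the careful verification on the boundary sets $I_3,I_4,I_5$: choosing one common $\eta$ across blocks, and checking that the cone-plus-ray sets $-{\cal K}_i+\mathbb{R}_+H_i$ arise exactly, rather than as a strict subset or superset, from complementary $u_i$ combined with $\eta H_i$.
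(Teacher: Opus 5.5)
Your proof is correct. Note, though, that the paper gives no proof of this lemma. It is imported from the work of Liang and Liu, and in the proof of Theorem~\ref{thm4} the KKT equivalence is attributed to Ye and Zhou. So there is no internal argument to follow line by line.

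The closest in-paper reasoning is the proof of Theorem~\ref{thm3}. There the paper performs exactly your forward splitting (KKT $\Rightarrow$ K-stationary) on the ALM iterates and then passes to the limit:
\begin{itemize}
\item it sets $\alpha_i^k=\Lambda_i^k+\eta^{k+1}H_i(z^k)\in{\cal K}_i^\circ+\mathbb{R}_+H_i(z^k)$;
\item it uses $-\Lambda_i^*\in\mathbb{R}_+\hat G_i(z^*)$ together with $H_i(z^*)=l\hat G_i(z^*)$ on $I_3^*$ to produce the $\nabla\phi_i^G$ terms;
\item it flips signs to arrive at \eqref{eqkstationary1}--\eqref{eqkstationary2}.
\end{itemize}

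What your route adds is the converse, K-stationary $\Rightarrow$ KKT, which the paper never writes out and never uses; Theorem~\ref{thm4} needs only the forward direction. Your device handles the fact that Definition~\ref{K-point} has no common scalar: take one $\eta$ as the maximum of the blockwise thresholds, and absorb the surplus $(\eta-t)H_i$ into $u_i$ when $G_i=0$. This works because each blockwise representation is monotone in $\eta$. On $I_3^*$ the coefficients $s=\eta k-t$ and $s'=\eta/k-t'$ stay nonnegative for larger $\eta$. When $G_i=0$, $u_i=w+(\eta-t)H_i$ stays in ${\cal K}_i$ for larger $\eta$. So whatever works for one $\eta$ works for all larger $\eta$.

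Two small points should be made explicit when you write this up.
\begin{itemize}
\item On $I_1^*$ (and symmetrically $I_2^*$), Definition~\ref{K-point} leaves $\alpha_i$ unconstrained. The converse therefore needs the observation that $-{\cal K}_i+\mathbb{R}_+H_i(z^*)=\mathbb{R}^{m_i}$ because $H_i(z^*)\in{\rm int}{\cal K}_i$. After that, your $I_4^*$ splitting argument applies verbatim.
\item The fact $\{u\in{\cal K}_i:\langle u,G_i\rangle=0\}=\mathbb{R}_+\hat G_i$ for $G_i\in{\rm bd}{\cal K}_i\setminus\{0\}$ follows from Lemma~\ref{lema2.2}, once you note that an interior $u\neq 0$ would give $\langle u,G_i\rangle>0$.
\end{itemize}
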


Combining Theorems \ref{thm1}--\ref{thm2} and Lemma \ref{lem1}, we establish the convergence for Algorithm \ref{alg:socmpcc}.

\begin{thm}\label{thm3}
Let \(\{z^k\}\) be a sequence generated by Algorithm \ref{alg:socmpcc}. Suppose that \(z^*\) be a limit point of \(\{z^k\}\) and it is feasible to SOCMPCC.
If the SOCMPCC-nondegenerate condition holds at \(z^*\), then \(z^*\) is a K-stationary point of SOCMPCC.
\end{thm}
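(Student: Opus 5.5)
The plan is the standard ALM convergence argument: pass to the limit in the approximate stationarity relation \eqref{2.9}, bound the multiplier approximations using the SOCMPCC-nondegenerate condition, and then check that the limiting multipliers satisfy the sign and structure conditions \eqref{eqkstationary2} of Lemma \ref{lem1}.

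\textbf{Setup.} Take a subsequence with $z^k\to z^*$. From \eqref{2.9} write
$$\pi^k=\nabla f(z^k)+\nabla g(z^k)\lambda^{k+1}+\nabla h(z^k)\mu^{k+1}+\nabla G(z^k)\Lambda^k+\nabla H(z^k)\Gamma^k+\eta^{k+1}\big(\nabla G(z^k)H(z^k)+\nabla H(z^k)G(z^k)\big).$$
Here $\lambda^{k+1}\ge0$, $\eta^{k+1}\ge0$, and $\Lambda^k,\Gamma^k\in{\cal K}^\circ=-{\cal K}$.

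\textbf{Step 1: vanishing of inactive multipliers.} For $i\notin I_g(z^*)$ we have $g_i(z^k)<c<0$ eventually. If $\rho^k\to\infty$, this gives $\lambda_i^{k+1}=0$ eventually, since $\bar\lambda^k$ is bounded. If $\rho^k$ stays bounded, condition \eqref{2.3} forces $\zeta_{k+1}\to0$, hence $\lambda^{k+1}_i\to0$. The same reasoning applied to $\Lambda^k=\rho^k\Pi_{{\cal K}^\circ}(G(z^k)+\bar\Lambda^k/\rho^k)$, blockwise via the spectral formula for $\Pi_{{\cal K}_i^\circ}$, gives three facts:
\begin{itemize}
\item on $I_G^+(z^*)$, $\Lambda_i^k\to0$ (it is eventually zero when $\rho^k\to\infty$);
\item on $B_G(z^*)$, only the component along $c_1(G_i(z^k))$ survives, so $\Lambda_i^k$ is asymptotically a nonpositive multiple of $\hat G_i(z^k)/\|G_i(z^k)\|$ plus $o(1)$;
\item on $I_G(z^*)$, $\Lambda^k_i\in-{\cal K}_i$.
\end{itemize}
The corresponding facts hold for $\Gamma^k$ with $H$ in place of $G$.

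\textbf{Step 2: regrouping into nondegeneracy gradients.} Regroup the cone and complementarity terms blockwise, using the index sets $I_1^*,\dots,I_6^*$ and $I^c(z^*)=\emptyset$ (feasibility). On $I_1^*$, $\eta^{k+1}\nabla G_i H_i$ joins the $\nabla G_{ij}$ terms and $\eta^{k+1}\nabla H_iG_i\to$ small. On $I_3^*$, Lemma \ref{lema2.2} gives $H_i(z^*)=l^{-1}\hat G_i(z^*)$, so $\nabla G_iH_i$ becomes a multiple of $\nabla\phi_i^G$. The surviving $\Lambda_i^k$ on $B_G$ is likewise a multiple of $\nabla\phi_i^G(z^k)$. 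Treat $I_4^*$ and $I_5^*$ similarly. Replacing $z^k$ by $z^*$ in the vectors costs a factor of the multipliers times $o(1)$. This yields an expression
$$\pi^k-\nabla f(z^k)=\sum(\text{coefficients}^k)\cdot(\text{nondegeneracy vectors at }z^*)+o(1)\cdot\|\text{coefficients}^k\|.$$

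\textbf{Step 3: boundedness (main obstacle).} Suppose the total coefficient vector is unbounded. Divide by its norm and pass to the limit. This produces a nontrivial vanishing linear combination of the nondegeneracy vectors, contradicting Definition \ref{SOCMPCC-nondegenerate}. I expect this to be the delicate step, because on $B_G$ the direction $\hat G_i(z^k)$ varies with $k$, and $\eta^{k+1}$ appears in several blocks simultaneously. I would handle both by absorbing the differences into the $o(1)$ error before normalizing. If I cannot cleanly isolate $\eta^{k+1}$, I would first bound $\eta^{k+1}$ using its appearance in an $I_3^*$ or $I_1^*$ block.

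\textbf{Step 4: passing to the limit.} With boundedness established, extract convergent subsequences of all coefficients and let $k\to\infty$, which gives \eqref{eqkstationary1}. The sign conditions are preserved under limits:
\begin{itemize}
\item $\lambda\ge0$, with complementarity to $g(z^*)$ from Step 1;
\item $\alpha_i=-\lim(\Lambda_i+\eta H_i)\in{\cal K}_i+\mathbb{R}_-H_i$ on $I_4^*$, and $\alpha_i\in{\cal K}_i$ on $I_6^*$, since there $\eta H_i\to$ absorbed and ${\cal K}_i$ is closed;
\item $\tilde\beta_i\ge0$ on $I_4^*$, by the sign of the surviving $c_1$-component.
\end{itemize}
These are exactly the conditions in \eqref{eqkstationary2}. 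Lemma \ref{lem1} then shows that $z^*$ is K-stationary.
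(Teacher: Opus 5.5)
Your plan follows the paper's route: regroup by $I_1^*,\dots,I_6^*$, normalize against the nondegeneracy vectors, then pass to the limit. However, the step you flag as delicate is a genuine gap, and the paper's own Case~2 does not close it either.

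\textbf{Where the argument breaks.} The trouble is $\eta^{k+1}$. Besides the $I_3^*$ terms, it produces cross terms $\eta^{k+1}\nabla H_i(z^k)G_i(z^k)$ for $i\in I_1^*\cup I_4^*\cup I_6^*$, and $\eta^{k+1}\nabla G_i(z^k)H_i(z^k)$ for $i\in I_2^*\cup I_5^*\cup I_6^*$. Your Step~2 calls these ``small'' and your Step~4 calls them ``absorbed''. Each is a product $\eta^{k+1}\cdot o(1)$, and neither treatment is justified.
\begin{itemize}
\item On $I_1^*,I_2^*,I_4^*,I_5^*$ the term lies in the range of $\nabla H_i$ (resp. $\nabla G_i$). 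There at most $\nabla\phi_i^H$ (resp. $\nabla\phi_i^G$) is a nondegeneracy vector, so the term cannot be absorbed.
\item On $I_6^*$, absorbing it into $\alpha_i^k,\beta_i^k$ destroys the sign condition required there.
\end{itemize}
So these terms vanish, or are $o(\|\text{coefficients}\|)$, only if $\eta^{k+1}$ is controlled.

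Your fallback cannot supply that control. In an $I_1^*$ block, $\eta^{k+1}$ enters only through $\Lambda_i^k+\eta^{k+1}H_i(z^k)$. In an $I_3^*$ block it enters only through a positive multiple of $\eta^{k+1}$ plus the nonpositive $c_1$-coefficient of $\Lambda_i^k$. In both cases the sum can stay bounded while $\eta^{k+1}$ and $\Lambda_i^k$ diverge.

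If instead $\eta$ is placed in the normalized vector, as the paper does with $M^k$, the normalized limit may be $\eta^*$ alone. That multiplies no nondegeneracy vector when $I_3^*=\emptyset$. The paper rules this case out only via $I_3^*$, and it never treats cancellation between $\eta^*H_i$ and $\Lambda_i^*$.

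A related problem affects Step~4 on $I_4^*,I_5^*$: it needs $\Lambda_i^k$ and $\eta^{k+1}$ to converge separately. For $m_i\ge3$ the set $\mathcal{K}_i+\mathbb{R}_-H_i(z^*)$ is not closed; its closure is $\{d:\langle d,\hat H_i(z^*)\rangle\ge0\}$.

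\textbf{The statement itself fails.} The gap cannot be closed. Take $J=1$, $m_1=1$ (so $\mathcal{K}=\mathbb{R}_+$), $G(z)=z_1$, $H(z)=z_2$, $f(z)=-z_1-z_2+\frac12\|z\|^2$, no $g$ or $h$. Choose $0\in B$, $\bar\Lambda^0=\bar\Gamma^0=0$, $\bar\eta^0\ge0$ and $\eta_{\max}>0$. If you prefer $m_1=2$, use $z\in\mathbb{R}^4$, $G(z)=Q(z_1,z_3)$, $H(z)=Q(z_2,z_4)$ with the orthogonal map $Q(a,b)=\frac{1}{\sqrt2}(a+b,a-b)$, which sends $\mathbb{R}^2_+$ onto $\mathcal{K}^2$.

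For $\rho>0$ and $\bar\eta\ge0$, the point $(t,t)$ with $t(1+\bar\eta+\rho t^2)=1$ is an exact stationary point of $L_\rho$. It is a saddle once $\bar\eta+\rho t^2>1$, so Step~1 may legitimately return $z^k=(t_k,t_k)$. Then:
\begin{itemize}
\item $\Lambda^k=\Gamma^k=V^k=U^k=0$ and $\eta^{k+1}=\bar\eta^k+\rho^kt_k^2$, hence $t_k(1+\eta^{k+1})=1$ and $\delta_{k+1}=-t_k^2$.
\item If $\rho^k$ were eventually constant, \eqref{2.3} would force $t_k\to0$. But $\eta^{k+1}\le\eta_{\max}+\rho$ keeps $t_k$ away from $0$, so $\rho^k\to\infty$.
\item Consequently $t_k\le(\rho^k)^{-1/3}\to0$ and $\eta^{k+1}t_k=1-t_k\to1$.
\end{itemize}
The limit $z^*=0$ is feasible with $I_6^*=\{1\}$, and nondegeneracy holds ($e_1,e_2$). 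Yet K-stationarity would require $\nabla f(0)+\alpha e_1+\beta e_2=0$ with $\alpha,\beta\le0$, which is impossible since $\nabla f(0)=(-1,-1)$. The culprit is exactly the cross term $\eta^{k+1}(H(z^k),G(z^k))=\eta^{k+1}(t_k,t_k)\to(1,1)$, and $z^*$ is merely C-stationary in the MPCC sense.

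\textbf{What can be salvaged.} Your argument can be completed under the extra hypothesis that $\{\eta^{k+1}\}$ is bounded, as happens for example when $\{\rho^k\}$ is bounded. Then all cross terms vanish, $\Lambda_i^k$ and $\eta^{k+1}$ converge separately, and Steps~3--4 go through.
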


\begin{proof}
As in Theorem~\ref{thm1}, without loss of generality, assume that $z^k\to z^*$. From Algorithm \ref{alg:socmpcc}, there exists a sequence $\{\pi^k\}$ with $\|\pi^k\| \leq \epsilon_k$ such that
\begin{eqnarray}
\begin{aligned}
\pi^k &= \nabla f(z^k) + \nabla g(z^k)(\bar{\lambda}^k + \rho^k g(z^k))_+ + \nabla h(z^k)(\bar{\mu}^k + \rho^k h(z^k)) \\
&\quad +{  \nabla G(z^k)\left(\rho^k\Pi_{\mathcal{K}^\circ}\left(G(z^k) + \tfrac{\bar{\Lambda}^k}{\rho^k}\right)\right)
+ \nabla H(z^k)\left(\rho^k\Pi_{\mathcal{K}^\circ}\left(H(z^k) + \tfrac{\bar{\Gamma}^k}{\rho^k}\right)\right) }\\
&\quad + \nabla (G(z^k)^TH(z^k))(\bar{\eta}^k + \rho^k\langle G(z^k), H(z^k)\rangle)_+.
\end{aligned}
\end{eqnarray}
This can be rewritten as
\begin{eqnarray*}
\pi^k = \nabla f(z^k) + \nabla g(z^k)\lambda^{k+1} + \nabla h(z^k)\mu^{k+1} +{  \nabla G(z^k)\Lambda^{k} + \nabla H(z^k)\Gamma^{k}}
+ \nabla (G(z^k)^TH(z^k))\eta^{k+1}.
\end{eqnarray*}

For any $i\notin I_g(z^*)$, the continuity of $g$ at $z^*$ ensures the existence of $\xi_1 < 0$ such that $g_i(z^k) < \xi_1$ for all sufficiently large $k$.
From the definition of $\lambda^{k+1}$ in Step 2 of Algorithm \ref{alg:socmpcc} and the boundedness of $\{\bar{\lambda}^k\}$, it follows that $\lambda^{k+1} = 0$ for $i\notin I_g(z^*)$ when $k$ is sufficiently large.
Similarly, for $i\in I_{G}^+(z^*)$ (i.e., $G_i(z^*)\in \operatorname{int}\mathcal{K}_i$), we have $G_i(z^k)\in \operatorname{int}\mathcal{K}_i$ for large $k$.
Consequently, $\Lambda_i^{k}=0$ for such indices. An analogous argument shows that $\Gamma_i^{k}=0$ {for $i\in I_{H}^+(z^*)$} for sufficiently large $k$.

Therefore, for all sufficiently large $k$, it holds
\begin{eqnarray}\label{3.11}
\pi^k &=& \nabla f(z^k) + \sum\limits_{i\in I_g(z^*)}\lambda_i^k\nabla g_i(z^k) + \sum\limits_{i=1}^q\mu_i^k\nabla h_i(z^k) + \sum\limits_{i\in I(z^*)}\eta^{k+1}\nabla (G_i(z^k)^TH_i(z^k))\nonumber\\
&& + \sum\limits_{i\in I_G(z^*)\cup B_G(z^*)}\nabla G_i(z^k)\Lambda_i^k
+ \sum\limits_{i\in I_H(z^*)\cup B_H(z^*)}\nabla H_i(z^k)\Gamma_i^k.
\end{eqnarray}
Using the differential rule
\begin{eqnarray*}
\sum\limits_{i\in I(z^*)}\eta^{k+1}\nabla (G_i(z^k)^TH_i(z^k)) = \sum\limits_{i\in I(z^*)}\eta^{k+1}\nabla G_i(z^k)H_i(z^k) + \sum\limits_{i\in I(z^*)}\eta^{k+1}\nabla H_i(z^k)G_i(z^k),
\end{eqnarray*}
and partitioning $I(z^*)=I_1^*\cup I_2^*\cup\cdots\cup I_6^*$, we expand (\ref{3.11}) as
\begin{eqnarray}\label{3.12}
\pi^k &=& \nabla f(z^k) + \sum\limits_{i\in I_g(z^*)}\lambda_i^k\nabla g_i(z^k) + \sum\limits_{i=1}^q\mu_i^k\nabla h_i(z^k) + \sum\limits_{i\in I_1^*}\nabla G_i(z^k)H_i(z^k)\eta^{k+1}\nonumber\\
&& + \sum\limits_{i\in I_2^*}\nabla G_i(z^k)H_i(z^k)\eta^{k+1} + \sum\limits_{i\in I_3^*}\nabla G_i(z^k)H_i(z^k)\eta^{k+1} + \sum\limits_{i\in I_4^*}\nabla G_i(z^k)H_i(z^k)\eta^{k+1}\nonumber\\
&& + \sum\limits_{i\in I_5^*}\nabla G_i(z^k)H_i(z^k)\eta^{k+1} + \sum\limits_{i\in I_6^*}\nabla G_i(z^k)H_i(z^k)\eta^{k+1} + \sum\limits_{i\in I_1^*}\nabla H_i(z^k)G_i(z^k)\eta^{k+1}\nonumber\\
&& + \sum\limits_{i\in I_2^*}\nabla H_i(z^k)G_i(z^k)\eta^{k+1} + \sum\limits_{i\in I_3^*}\nabla H_i(z^k)G_i(z^k)\eta^{k+1} + \sum\limits_{i\in I_4^*}\nabla H_i(z^k)G_i(z^k)\eta^{k+1}\nonumber\\
&& + \sum\limits_{i\in I_5^*}\nabla H_i(z^k)G_i(z^k)\eta^{k+1} + \sum\limits_{i\in I_6^*}\nabla H_i(z^k)G_i(z^k)\eta^{k+1} + \sum\limits_{i\in I_G(z^*)\cup B_G(z^*)}\nabla G_i(z^k)\Lambda_i^k\nonumber\\
&& + \sum\limits_{i\in I_H(z^*)\cup B_H(z^*)}\nabla H_i(z^k)\Gamma_i^k.
\end{eqnarray}

The feasible point  $z^*$ satisfies $G(z^*)\in\mathcal{K}, H(z^*)\in\mathcal{K}$, allowing us to decompose the multiplier terms as
\begin{eqnarray*}
\sum\limits_{i\in I_G(z^*)}\nabla G_i(z^k)\Lambda_i^k &=& \sum\limits_{i\in I_G(z^*)\cap I_H^+(z^*)}\nabla G_i(z^k)\Lambda_i^k + \sum\limits_{i\in I_G(z^*)\cap B_H(z^*)}\nabla G_i(z^k)\Lambda_i^k\\
&& + \sum\limits_{i\in I_G(z^*)\cap I_H(z^*)}\nabla G_i(z^k)\Lambda_i^k,\\
&=& \sum\limits_{i\in I_1^*}\nabla G_i(z^k)\Lambda_i^k + \sum\limits_{i\in I_4^*}\nabla G_i(z^k)\Lambda_i^k + \sum\limits_{i\in I_6^*}\nabla G_i(z^k)\Lambda_i^k.
\end{eqnarray*}
Similarly, we have
\begin{eqnarray*}
\sum\limits_{i\in I_H(z^*)}\nabla H_i(z^k)\Gamma_i^k=\sum\limits_{i\in I_2^*}\nabla H_i(z^k)\Gamma_i^k+\sum\limits_{i\in I_5^*}\nabla H_i(z^k)\Gamma_i^k+\sum\limits_{i\in I_6^*}\nabla H_i(z^k)\Gamma_i^k.
\end{eqnarray*}
We can also expand $\sum\limits_{i\in B_G(z^*)}\nabla G_i(z^k)\Lambda_i^k$ as
\begin{eqnarray*}
\sum\limits_{i\in B_G(z^*)}\nabla G_i(z^k)\Lambda_i^k&&=\sum\limits_{i\in B_G(z^*)\cap I_H^+(z^*)}\nabla G_i(z^k)\Lambda_i^k+\sum\limits_{i\in B_G(z^*)\cap I_H(z^*)}\nabla G_i(z^k)\Lambda_i^k \nonumber\\
&&+\sum\limits_{i\in B_G(z^*)\cap B_H(z^*)\cap C(z^*)}\nabla G_i(z^k)\Lambda_i^k+\sum\limits_{i\in B_G(z^*)\cap B_H(z^*)\backslash C(z^*)}\nabla G_i(z^k)\Lambda_i^k,\nonumber\\
\end{eqnarray*}
where the index sets $B_G(z^*)\cap I_H^+(z^*)$ and $B_G(z^*)\cap B_H(z^*)\backslash C(z^*)$ are empty, hence, we have
\begin{eqnarray*}
\sum\limits_{i\in B_G(z^*)}\nabla G_i(z^k)\Lambda_i^k=\sum\limits_{i\in I_3^*}\nabla G_i(z^k)\Lambda_i^k+\sum\limits_{i\in I_5^*}\nabla G_i(z^k)\Lambda_i^k.
\end{eqnarray*}
Similarly, we have
\begin{eqnarray*}
\sum\limits_{i\in B_H(z^*)}\nabla H_i(z^k)\Gamma_i^k=\sum\limits_{i\in I_3^*}\nabla H_i(z^k)\Gamma_i^k+\sum\limits_{i\in I_5^*}\nabla H_i(z^k)\Gamma_i^k.
\end{eqnarray*}
Combining these results, \eqref{3.12} can be rewritten as
\begin{eqnarray}\label{3.13}
\pi^k=&&\nabla f(z^k)+\sum\limits_{i\in I_g(z^*)}\lambda_i^k\nabla g_i(z^k)+\sum\limits_{i=1}^q\mu_i^k\nabla h_i(z^k)+\sum\limits_{i\in I_1^*}\nabla G_i(z^k)(H_i(z^k){\eta}^{k+1}+\Lambda_i^k)\nonumber\\
&&+\sum\limits_{i\in I_3^*}\nabla G_i(z^k)(H_i(z^k){\eta}^{k+1}+\Lambda_i^k)+\sum\limits_{i\in I_4^*}\nabla G_i(z^k)(H_i(z^k){\eta}^{k+1}+\Lambda_i^k)+\sum\limits_{i\in I_5^*}\nabla G_i(z^k)\Lambda_i^k\nonumber\\
&&+\sum\limits_{i\in I_2^*}\nabla H_i(z^k)(G_i(z^k){\eta}^{k+1}+\Gamma_i^k)+\sum\limits_{i\in I_5^*}\nabla H_i(z^k)\Gamma_i^k+\sum\limits_{i\in I_3^*}\nabla H_i(z^k)(G_i(z^k){\eta}^{k+1}+\Gamma_i^k)\nonumber\\
&&+\sum\limits_{i\in I_5^*}\nabla H_i(z^k)(G_i(z^k){\eta}^{k+1}+\Gamma_i^k)+\sum\limits_{i\in I_6^*}\nabla H_i(z^k)\Gamma_i^k+\sum\limits_{i\in I_6^*}\nabla G_i(z^k)\Lambda_i^k+A,
\end{eqnarray}
where
\begin{eqnarray*}
A=&&\sum\limits_{i\in I_1^*}\nabla H_i(z^k)G_i(z^k){\eta}^{k+1}+\sum\limits_{i\in I_2^*}\nabla G_i(z^k)H_i(z^k){\eta}^{k+1}+\sum\limits_{i\in I_4^*}\nabla H_i(z^k)G_i(z^k){\eta}^{k+1}\\&&+\sum\limits_{i\in I_5^*}\nabla G_i(z^k)H_i(z^k){\eta}^{k+1}+\sum\limits_{i\in I_6^*}\nabla H_i(z^k)G_i(z^k){\eta}^{k+1}+\sum\limits_{i\in I_6^*}\nabla G_i(z^k)H_i(z^k){\eta}^{k+1}.
\end{eqnarray*}
We can further reformulate (\ref{3.13}) into the following compact form
\begin{eqnarray}\label{4.10}
\pi^k=&&\nabla f(z^k)+\sum\limits_{i\in I_g(z^*)}\lambda_i^k\nabla g_i(z^k)+\sum\limits_{i=1}^q\mu_i^k\nabla h_i(z^k)+\sum\limits_{i\in I_1^*\cup I_4^*}\nabla G_i(z^k)\alpha_i^k+\sum\limits_{i\in I_6^*}\nabla G_i(z^k)\alpha_i^k \nonumber\\
&&+\sum\limits_{i\in I_2^*\cup I_5^*}\nabla H_i(z^k)\beta_i^k+\sum\limits_{i\in I_6^*}\nabla H_i(z^k)\beta_i^k+\sum\limits_{i\in I_3^*}\nabla G_i(z^k)(H_i(z^k){\eta}^{k+1}+\Lambda_i^k)+\sum\limits_{i\in I_5^*}\nabla G_i(z^k)\Lambda_i^k\nonumber\\
&&+\sum\limits_{i\in I_3^*}\nabla H_i(z^k)(G_i(z^k){\eta}^{k+1}+\Gamma_i^k)+\sum\limits_{i\in I_4^*}\nabla H_i(z^k)\Gamma_i^k+A,
\end{eqnarray}
where $\alpha_i^k$ and $\beta_i^k$ satisfy the following conditions
\begin{eqnarray*}
\left\{ \begin{array}{ll}
\alpha_i^k=\Lambda_i^k+H_i(z^k){\eta}^{k+1}\in {\cal K}^\circ_{i}+ \mathbb{R}_+H_i(z^k), & i\in I_1^*;\\
\beta_i^k=\Gamma_i^k+G_i(z^k){\eta}^{k+1}\in {\cal K}^\circ_{i}+ \mathbb{R}_+G_i(z^k), & i\in I_2^*;\\
 \alpha_i^k=\Lambda_i^k+H_i(z^k){\eta}^{k+1}\in {\cal K}^\circ_{i}+ \mathbb{R}_+H_i(z^k), & i\in I_4^*;\\
\beta_i^k=\Gamma_i^k+G_i(z^k){\eta}^{k+1}\in {\cal K}^\circ_{i}+ \mathbb{R}_+G_i(z^k), & i\in I_5^*;\\
\alpha_i^k=\Lambda_i^k\in {\cal K}^\circ_{i},~ \beta_i^k=\Gamma_i^k\in {\cal K}^\circ_{i}, & i\in I_6^*.
\end{array} \right.
\end{eqnarray*}

Definition
\begin{eqnarray*}
{M^k}: = \max \left\{ \begin{array}{l}
{ \lambda_i^k},i\in I_g(z^*);\ |\mu_i^k|,i\in \{1,2,\dots,q\};\ \|\alpha_i^k\|,i\in I_1^*\cup I_4^*\cup I_6^*;\ \|\Lambda_i^k\|,i\in I_3^*;\\
\|\Lambda_i^k\|,i\in I_5^*;\ \|\beta_i^k\|,i\in I_2^*\cup I_5^*\cup I_6^*;
\ ||\Gamma_i^k\|,i\in I_3^*;\ \|\Gamma_i^k\|,i\in I_4^*;\ \eta^k.
\end{array} \right\}.
\end{eqnarray*}

Case 1: If the sequence $\{M^k\}$ is bounded, we assume (without loss of generality) a convergent subsequence if necessary:
$$\begin{array}{l}
\lambda_i^k\to\lambda^*_i,i\in I_g(z^*);\ \mu_i^k\to\mu^*_i,i\in \{1,2,\dots,q\};\ \alpha_i^k\to\alpha^*_i,i\in I_1^*\cup I_4^*\cup I_6^*;\ \Lambda_i^k\to\Lambda^*_i,i\in I_3^*;\\
\Lambda_i^k\to\Lambda^*_i,i\in I_5^*;\  \beta_i^k\to\beta^*_i,i\in I_2^*\cup I_5^*\cup I_6^*;\ \Gamma_i^k\to\Gamma^*_i,i\in I_3^*;\ \Gamma_i^k\to\Gamma^*_i,i\in I_4^*;\ \eta^k\to\eta^*.
\end{array}
$$
Since $\eta^k \to \eta^*$, the sequence $\{\eta^k\}$ is bounded. By the definition of $A$, we conclude that $A \to 0$. Taking limits in \eqref{4.10} yields
\begin{eqnarray}\label{4.11}
0=&&\nabla f(z^*)+\sum\limits_{i\in I_g(z^*)}\lambda_i^*\nabla g_i(z^*)+\sum\limits_{i=1}^q\mu_i^k\nabla h_i(z^*)+\sum\limits_{i\in I_1^*\cup I_4^*\cup I_6^*}\nabla G_i(z^*)\alpha^*_i \nonumber\\
&&+\sum\limits_{i\in I_2^*\cup I_5^*\cup I_6^*}\nabla H_i(z^*)\beta^*_i+\sum\limits_{i\in I_3^*}\nabla G_i(z^*)(H_i(z^*){\eta}^{*}+\Lambda_i^*)+\sum\limits_{i\in I_5^*}\nabla G_i(z^*)\Lambda_i^*\nonumber\\
&&+\sum\limits_{i\in I_3^*}\nabla H_i(z^*)(G_i(z^*){\eta}^{*}+\Gamma_i^*)+\sum\limits_{i\in I_4^*}\nabla H_i(z^*)\Gamma_i^*,
\end{eqnarray}
where $\alpha^*_i$ and $\beta^*_i$ satisfy the following conditions
\begin{eqnarray*}
\left\{ \begin{array}{ll}
\alpha^*_i\in {\cal K}^\circ_{i}+ \mathbb{R}_+H_i(z^*), & i\in I_1^*;\\
\beta^*_i\in {\cal K}^\circ_{i}+ \mathbb{R}_+G_i(z^*), & i\in I_2^*;\\
 \alpha^*_i\in {\cal K}^\circ_{i}+ \mathbb{R}_+H_i(z^*), & i\in I_4^*;\\
\beta^*_i\in {\cal K}^\circ_{i}+ \mathbb{R}_+G_i(z^*), & i\in I_5^*;\\
\alpha^*_i\in {\cal K}^\circ_{i},\beta_i^*\in {\cal K}^\circ_{i}, & i\in I_6^*.
\end{array} \right.
\end{eqnarray*}

The feasible point $z^*$ implies that $g_i(z^*)\leq 0$ holds for all constraints. From the definition of $\zeta^{k+1}$ in Algorithm \ref{alg:socmpcc} and the multiplier update rule, when $\{M^k\}$ is bounded, $\lambda_i^*=0$ holds whenever $g_i(z^*)<0$.
Consequently, the complementarity condition $g_i(z^*)^T\lambda_i^*=0$ is satisfied when $\{M^k\}$ is bounded.

For indices $i\in I_3(z^*)$, we have $G_i(z^*)\in\operatorname{bd}\mathcal{K}_i\backslash\{0\}$, $H_i(z^*)\in\operatorname{bd}\mathcal{K}_i\backslash\{0\}$, and $\langle G_i(z^*),H_i(z^*)\rangle=0$, with $H_i(z^*)=l\hat{G}_i(z^*)$ for some $l\in\mathbb{R}_{++}$.
Since $\Lambda_i^k\in\mathcal{K}_i^\circ$, $\Lambda_i^*\in\mathcal{K}_i^\circ$ satisfies $\langle G_i(z^*),\Lambda_i^*\rangle=0$ and $\langle G_i(z^*),-\Lambda_i^*\rangle=0$. The second-order cone properties imply $-\Lambda_i^*\in\mathcal{K}_i$, specifically $-\Lambda_i^*\in\mathbb{R}_+\hat{G}_i(z^*)$.

Defining the operators $\nabla\phi_i^G(z^*)=\nabla G_i(z^*)\hat{G}_i(z^*)$ for $i\in I_3^*\cup I_5^*$ and $\nabla\phi_i^H(z^*)=\nabla H_i(z^*)\hat{H}_i(z^*)$ for $i\in I_3^*\cup I_4^*$, we derive the identities:
\begin{align*}
\sum_{i\in I_3^*}\nabla G_i(z^*)(H_i(z^*)\eta^*+\Lambda_i^*) &= \sum_{i\in I_3^*}\tilde{\alpha}_i^*\nabla \phi_i^G(z^*), \quad \tilde{\alpha}_i^*\in\mathbb{R}, \\
\sum_{i\in I_5^*}\nabla G_i(z^*)\Lambda_i^* &= \sum_{i\in I_5^*}\tilde{\alpha}_i^*\nabla \phi_i^G(z^*), \quad \tilde{\alpha}_i^*\in\mathbb{R}_-, \\
\sum_{i\in I_3^*}\nabla H_i(z^*)(G_i(z^*)\eta^*+\Gamma_i^*) &= \sum_{i\in I_3^*}\tilde{\beta}_i^*\nabla \phi_i^H(z^*), \quad \tilde{\beta}_i^*\in\mathbb{R}, \\
\sum_{i\in I_4^*}\nabla H_i(z^*)\Gamma_i^* &= \sum_{i\in I_4^*}\tilde{\beta}_i^*\nabla \phi_i^H(z^*), \quad \tilde{\beta}_i^*\in\mathbb{R}_-.
\end{align*}
(\ref{4.11}) can be rewritten in the following compact form
\begin{eqnarray}
0=&&\nabla f(z^*)+\sum\limits_{i\in I_g(z^*)}\lambda_i^*\nabla g_i(z^*)+\sum\limits_{i=1}^q\mu_i^*\nabla h_i(z^*)-\sum\limits_{i\in I_1^*\cup I_4^*\cup I_6^*}\nabla G_i(z^*)\alpha_i^* \nonumber\\
&&-\sum\limits_{i\in I_2^*\cup I_5^*\cup I_6^*}\nabla H_i(z^*)\beta_i^*-\sum\limits_{i\in I_3^*\cup I_5^*} \tilde\alpha_i^*\nabla\phi_i^G(z^*)-\sum\limits_{i\in I_3^*\cup I_4^*}\tilde\beta_i^*\nabla\phi_i^H(z^*),\nonumber\\
\end{eqnarray}
where the variables satisfy the following conditions
\begin{eqnarray*}
\left\{ \begin{array}{ll}
\lambda_i^*\geq 0,\ g_i(z^*)^T\lambda_i^*=0,\\
\alpha^*_i\in\mathbb{R}^{m_i}, & i\in I_1^*;\\
\beta^*_i\in\mathbb{R}^{m_i}, & i\in I_2^*;\\
\tilde\alpha_i^*\in\mathbb{R},\ \tilde\beta_i^*\in\mathbb{R}, & i\in I_3^*;\\
\alpha^*_i\in {\cal K}_{i} +\mathbb{R}\_H_i(z^*),\ \tilde\beta^*_i\in\mathbb{R}_+, & i\in I_4^*;\\
\tilde\alpha^*_i\in\mathbb{R}_+,\ \beta^*_i\in {\cal K}_{i}+ \mathbb{R}\_G_i(z^*), & i\in I_5^*;\\
\alpha^*_i\in {\cal K}_{i},\ \beta^*_i\in {\cal K}_{i}, & i\in I_6^*.
\end{array} \right.
\end{eqnarray*}

Therefore, $z^*$ is a K-stationary point.

Case 2: If the sequence $\{M^k\}$ is unbounded, i.e., $M^k \to +\infty$ as $k \to \infty$, then the following normalized multiplier sequences remain bounded:
\begin{align*}
&\left\{\tfrac{\lambda_i^k}{M^k}\right\}\!, \ i \in I_g(z^*);
\quad \left\{\tfrac{\mu_i^k}{M^k}\right\}\!, \ i \in \{1,2,\dots,q\}; \\
&\left\{\tfrac{\alpha_i^k}{M^k}\right\}\!, \ i \in I_1^* \cup I_4^* \cup I_6^*;
\quad \left\{\tfrac{\Lambda_i^k}{M^k}\right\}\!, \ i \in I_3^*; \\
&\left\{\tfrac{\Lambda_i^k}{M^k}\right\}\!, \ i \in I_5^*;
\quad \left\{\tfrac{\beta_i^k}{M^k}\right\}\!, \ i \in I_2^* \cup I_5^* \cup I_6^*; \\
&\left\{\tfrac{\Gamma_i^k}{M^k}\right\}\!, \ i \in I_3^*;
\quad \left\{\tfrac{\Gamma_i^k}{M^k}\right\}\!, \ i \in I_4^*;
\quad \left\{\tfrac{\eta^k}{M^k}\right\}\!.
\end{align*}
By considering a convergent subsequence if necessary (without loss of generality), we may assume the following limits exist:
$$\begin{array}{l}
\tfrac{\lambda_i^k}{M^k}\to\lambda^*_i,i\in I_g(z^*);\ \tfrac{\mu_i^k}{M^k}\to\mu^*_i,i\in \{1,2,\dots,q\};\ \tfrac{\alpha_i^k}{M^k}\to\alpha^*_i,i\in I_1^*\cup I_4^*\cup I_6^*;\ \tfrac{\Lambda_i^k}{M^k}\to\Lambda^*_i,i\in I_3^*;\\
\tfrac{\Lambda_i^k}{M^k}\to\Lambda^*_i,i\in I_5^*;\ \tfrac{\beta_i^k}{M^k}\to\beta^*_i,i\in I_2^*\cup I_5^*\cup I_6^*;\ \tfrac{\Gamma_i^k}{M^k}\to \Gamma^*_i,i\in I_3;\ \tfrac{\Gamma_i^k}{M^k}\to\Gamma^*_i,i\in I_4;\ \tfrac{\eta^k}{M^k}\to\eta^*.
\end{array}$$

By definition of $\{M^k\}$, the limit vector $(\lambda^*_i (i\in I_g(z^*)), \mu^*_i (i\in \{1,\dots,q\}), \alpha^*_i (i\in I_1^*\cup I_4^*\cup I_6^*), \beta^*_i (i\in I_2^*\cup I_5^*\cup I_6^*), \Lambda^*_i (i\in I_3^*\cup I_5^*), \Gamma^*_i (i\in I_3^*\cup I_4^*), \eta^*)$ is nonzero. For simplicity, we denote this as $(\lambda^*, \mu^*, \alpha^*, \beta^*, \Lambda^*, \Gamma^*, \eta^*) \neq {0}$.

Since $\tfrac{\eta^k}{M^k} \to \eta^*$, the sequence $\{\tfrac{\eta^k}{M^k}\}$ is bounded. From the definition of $A$, we have $\tfrac{A}{M^k} \to 0$. Dividing equation (\ref{4.10}) by $M^k$ and taking limits yields
\begin{equation}\label{4.13}
\begin{aligned}
0 = & \sum_{i\in I_g(z^*)} \lambda_i^* \nabla g_i(z^*) + \sum_{j=1}^q \mu_j^* \nabla h_j(z^*) + \sum_{i\in I_1^*\cup I_4^*\cup I_6^*} \nabla G_i(z^*) \alpha_i^* \\
& + \sum_{i\in I_2^*\cup I_5^*\cup I_6^*} \nabla H_i(z^*) \beta_i^* + \sum_{i\in I_3^*} \nabla G_i(z^*)(H_i(z^*)\eta^* + \Lambda_i^*) \\
& + \sum_{i\in I_5^*} \nabla G_i(z^*) \Lambda_i^* + \sum_{i\in I_3^*} \nabla H_i(z^*)(G_i(z^*)\eta^* + \Gamma_i^*) \\
& + \sum_{i\in I_4^*} \nabla H_i(z^*) \Gamma_i^*.
\end{aligned}
\end{equation}
Following the same proof procedure as in the bounded $\{M^k\}$ case, (\ref{4.13}) can be rewritten as
\begin{equation}\label{4.14}
\begin{aligned}
0 = & \sum_{i\in I_g(z^*)} \lambda_i^* \nabla g_i(z^*) + \sum_{j=1}^q \mu_j^* \nabla h_j(z^*)
- \sum_{i\in I_1^*\cup I_4^*\cup I_6^*} \nabla G_i(z^*) \alpha_i^* \\
& - \sum_{i\in I_2^*\cup I_5^*\cup I_6^*} \nabla H_i(z^*) \beta_i^*
- \sum_{i\in I_3^*\cup I_5^*} \tilde{\alpha}_i^* \nabla \phi_i^G(z^*)
- \sum_{i\in I_3^*\cup I_4^*} \tilde{\beta}_i^* \nabla \phi_i^H(z^*),
\end{aligned}
\end{equation}
where the variables satisfy
\begin{eqnarray*}
\left\{ \begin{array}{ll}
z^*\in\mathbb{R}^n,\lambda_i^*\in\mathbb{R}^p_+,\mu_j^*\in\mathbb{R}^q;\\
\alpha^*_i\in\mathbb{R}^{m_i}, & i\in I_1^*;\\
\beta^*_i\in\mathbb{R}^{m_i}, & i\in I_2^*;\\
\tilde\alpha_i^*\in\mathbb{R},\tilde\beta_i^*\in\mathbb{R}, & i\in I_3^*;\\
\alpha^*_i\in {\cal K}_{i} +\mathbb{R}\_H_i(z^*),\tilde\beta^*_i\in\mathbb{R}_+, & i\in I_4^*;\\
\tilde\alpha^*_i\in\mathbb{R}_+,\beta^*_i\in {\cal K}_{i}+ \mathbb{R}\_G_i(z^*), & i\in I_5^*;\\
\alpha^*_i\in {\cal K}_{i},\beta^*_i\in {\cal K}_{i}, & i\in I_6^*.
\end{array} \right.
\end{eqnarray*}

For simplicity, we denote the combined multiplier vector as $(\lambda^*_i(i\in I_g(z^*)),\ \mu^*_j(j\in \{1,2,\dots,q\}),\\ \alpha^*_i(i\in I_1^*\cup I_4^*\cup I_6^*),\ \beta^*_i(i\in I_2^*\cup I_5^*\cup I_6^*),\ \tilde\alpha^*_i(i\in I_3^*\cup I_5^*),\ \tilde\beta^*_i(i\in I_3^*\cup I_4^*))=(\lambda^*,\ \mu^*,\ \alpha^*,\ \beta^*,\ \tilde\alpha^*,\ \tilde\beta^*)$.
It is clear that when $(\lambda^*, \mu^*, \alpha^*, \beta^*) \neq 0$, it follows that $(\lambda^*, \mu^*, \alpha^*, \beta^*, \tilde{\alpha}^*, \tilde{\beta}^*) \neq 0$.

When $\eta^* \neq 0$ while $(\lambda^*, \mu^*, \alpha^*, \beta^*, \Lambda^*, \Gamma^*) = 0$, we must have $\tilde{\alpha}^* \neq 0$ because for each $i \in I_3^*$,
 the summation $\sum\limits_{i\in I_3^*} \nabla G_i(z^*)(H_i(z^*)\eta^* + \Lambda_i^*)$ reduces to $\sum\limits_{i\in I_3^*} \nabla G_i(z^*)H_i(z^*)\eta^*$, and the relation $H_i(z^*) = l\hat{G}_i(z^*)$ with $l \in \mathbb{R}_{++}$ implies $\tilde{\alpha}^* = l\eta^* \neq 0$.

When $\Lambda^*\ne 0$ and $(\lambda^*, \mu^*, \eta^*, \alpha^*, \beta^*, \Gamma^*) = 0$, we have $\tilde{\alpha}^* \ne 0$. This is because in this case
\begin{equation*}
\sum_{i\in I_3^*} \nabla G_i(z^*)(H_i(z^*)\eta^* + \Lambda_i^*) + \sum_{i\in I_5^*} \nabla G_i(z^*)\Lambda_i^* = \sum_{i\in I_3^* \cup I_5^*} \nabla G_i(z^*)\Lambda_i^*,
\end{equation*}
and for $i \in I_3^* \cup I_5^*$, $G_i(z^*) \in \mathrm{bd}\mathcal{K}_i \backslash \{0\}$. Combining with $\Lambda_i^k \in \mathcal{K}_i^\circ$, we know $\Lambda_i^* \in \mathcal{K}_i^\circ \backslash \{0\}$, and we have $\langle G_i(z^*), -\Lambda_i^* \rangle = 0$. Thus $\Lambda_i^* \in \mathrm{bd}\mathcal{K}_i^\circ \backslash \{0\}$.
Because when $\Lambda_i^* \in \mathrm{int}\mathcal{K}_i^\circ$, $-\Lambda_i^* \in \mathrm{int}\mathcal{K}_i$ would lead to $\langle G_i(z^*), -\Lambda_i^* \rangle \ne 0$, which contradicts the previous statement. Therefore $\Lambda_i^* \in \mathrm{bd}\mathcal{K}_i^\circ \backslash \{0\}$, meaning $-\Lambda_i^* \in \mathrm{bd}\mathcal{K}_i \backslash \{0\}$. By Lemma \ref{lema2.2}, we know $-\Lambda_i^* = l\hat{G}_i(z^*)$ with $l \in \mathbb{R}_{++}$, and in this case $\tilde{\alpha}^* = l > 0$.

An analogous conclusion holds for $\tilde{\beta}^* \neq 0$ when $\Gamma^* \neq 0$ while $(\lambda^*, \mu^*, \eta^*, \alpha^*, \beta^*, \Lambda^*) = 0$, since for $i \in I_3^* \cup I_4^*$ the same reasoning gives $-\Gamma_i^* = l\hat{H}_i(z^*)$ with $l \in \mathbb{R}_{++}$ and consequently $\tilde{\beta}^* = l > 0$.

In summary, when $(\lambda^*, \mu^*, \eta^*, \alpha^*, \beta^*, \Lambda^*, \Gamma^*) \neq 0$, we have $(\lambda^*, \mu^*, \alpha^*, \beta^*, \tilde{\alpha}^*, \tilde{\beta}^*) \neq 0$, leading to
\begin{equation*}
\begin{aligned}
0 = & \sum_{i\in I_g(z^*)} \lambda_i^* \nabla g_i(z^*) + \sum_{i=1}^q \mu_i^* \nabla h_i(z^*)
- \sum_{i\in I_1^*\cup I_4^*\cup I_6^*} \nabla G_i(z^*) \alpha_i^* \\
& - \sum_{i\in I_2^*\cup I_5^*\cup I_6^*} \nabla H_i(z^*) \beta_i^*
- \sum_{i\in I_3^*\cup I_5^*} \tilde{\alpha}_i^* \nabla \phi_i^G(z^*)
- \sum_{i\in I_3^*\cup I_4^*} \tilde{\beta}_i^* \nabla \phi_i^H(z^*).
\end{aligned}
\end{equation*}

This contradicts the SOCMPCC-nondegenerate condition (Definition \ref{SOCMPCC-nondegenerate}). Consequently, the sequence $\{M^k\}$ must be bounded, and therefore $z^*$ is indeed a K-stationary point.

This completes the proof.
\end{proof}

\begin{remark}\label{remark3.1}\rm
From the definition of \( M^k \) in Theorem \ref{thm3}, the boundedness of \(\{M^k\}\) implies the boundedness of the multiplier sequences \(\{\lambda^{k+1}\}\), \(\{\eta^{k+1}\}\), \(\{\mu^{k+1}\}\), \(\{\Lambda^{k}\}\) and \(\{\Gamma^{k}\}\).
By construction of Algorithm \ref{alg:socmpcc}, this ensures that \(\lambda^{k+1}\), \(\eta^{k+1}\), \(\mu^{k+1}\), \(\Lambda^{k}\) and \(\Gamma^{k}\) cease to update after a finite number of iterations.
Consequently, the boundedness of \(\{M^k\}\) guarantees the boundedness of the penalty parameter sequence \(\{\rho^k\}\).
Conversely, if \(\{M^k\}\) is unbounded, then \(\{\rho^k\}\) must also be unbounded.
\end{remark}

As observed in Remark \ref{remark3.1}, if the penalty parameter sequence $\{\rho^k\}$ is bounded, the convergence to K-stationary can be established by leveraging the equivalence between the K-stationarity conditions of SOCMPCC and the classical KKT conditions of K-SOCMPCC.

Theorem \ref{thm4} obtains that when the penalty parameter sequence is bounded, any limit point of \(\{z^k\}\) generated by Algorithm \ref{alg:socmpcc} is not only feasible for the SOCMPCC, but also K-stationary.

\begin{thm}\label{thm4}
Let \(\{z^k\}\) be a sequence generated by Algorithm \ref{alg:socmpcc}. Suppose that \(z^*\) be a limit point of \(\{z^k\}\).
If the penalty parameter sequence $\{\rho^k\}$ is bounded, then \(z^*\) is K-stationary for the SOCMPCC.
\end{thm}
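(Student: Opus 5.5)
The plan is in two stages: first get feasibility from Theorem \ref{thm1}, then show that the multipliers generated by Algorithm \ref{alg:socmpcc} stay bounded. Once they are bounded, we pass to the limit in the approximate stationarity equation and read off a KKT point of K-SOCMPCC. Lemma \ref{lem1} then converts this into K-stationarity. Note that nondegeneracy cannot be used here; boundedness of $\{\rho^k\}$ must replace it.

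\textbf{Setup and feasibility.} We pass to a subsequence with $z^k\to z^*$. Since $\{\rho^k\}$ is bounded and nondecreasing with $\rho^{k+1}=\tau\rho^k$ whenever it changes, it is eventually constant, say $\rho^k=\rho^*$ for $k\ge k_0$. Theorem \ref{thm1} then gives that $z^*$ is feasible.

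\textbf{Bounded multipliers and the limit KKT system.} The multipliers of Step 2 are
$\lambda^{k+1}=(\bar\lambda^k+\rho^*g(z^k))_+$, $\mu^{k+1}=\bar\mu^k+\rho^*h(z^k)$, $\eta^{k+1}=(\bar\eta^k+\rho^*\langle G(z^k),H(z^k)\rangle)_+$, $\Lambda^k=\rho^*\Pi_{{\cal K}^\circ}(G(z^k)+\bar\Lambda^k/\rho^*)$ and $\Gamma^k=\rho^*\Pi_{{\cal K}^\circ}(H(z^k)+\bar\Gamma^k/\rho^*)$. All of them are bounded, because the safeguarded multipliers lie in compact boxes or in $B$, $z^k$ converges, and projection is continuous. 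We extract a further subsequence along which $(\bar\lambda^k,\bar\mu^k,\bar\eta^k,\bar\Lambda^k,\bar\Gamma^k)$ and $(\lambda^{k+1},\mu^{k+1},\eta^{k+1},\Lambda^k,\Gamma^k)$ converge, with limits $(\lambda^*,\mu^*,\eta^*,\Lambda^*,\Gamma^*)$. Step 1 gives
\[
\pi^k=\nabla f(z^k)+\nabla g(z^k)\lambda^{k+1}+\nabla h(z^k)\mu^{k+1}+\nabla G(z^k)\Lambda^k+\nabla H(z^k)\Gamma^k+\nabla(G(z^k)^TH(z^k))\eta^{k+1},
\]
with $\|\pi^k\|\le\epsilon_k\to0$. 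Taking limits yields the stationarity equation of the Lagrangian of K-SOCMPCC at $z^*$. The signs are immediate: $\lambda^*\ge0$, $\eta^*\ge0$, and $\Lambda^*,\Gamma^*\in{\cal K}^\circ$.

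\textbf{Complementarity (the main obstacle).} It remains to show $\langle\lambda^*,g(z^*)\rangle=0$, $\eta^*\langle G(z^*),H(z^*)\rangle=0$, $\langle\Lambda^*,G(z^*)\rangle=0$ and $\langle\Gamma^*,H(z^*)\rangle=0$. Condition (\ref{2.3}) holds for all large $k$, so $\zeta_{k+1}\to0$ and $\delta_{k+1}\to0$.
\begin{itemize}
\item If $g_i(z^*)<0$, then $\zeta_{k+1,i}=\min\{\lambda_i^{k+1},-g_i(z^k)\}\to0$ forces $\lambda_i^*=0$.
\item The same argument with $\delta_{k+1}$ handles $\eta^*$ whenever $\langle G(z^*),H(z^*)\rangle<0$. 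In fact feasibility already gives $\langle G(z^*),H(z^*)\rangle=0$, so this term is trivially fine.
\item For the cone terms, (\ref{2.4}) gives $V^k\to0$, i.e.\ $\bar\Lambda^k/\rho^*-\Lambda^k/\rho^*\to0$, so $\bar\Lambda^*=\Lambda^*$. Moreau's decomposition (Lemma \ref{lema3}) applied to $w^k=G(z^k)+\bar\Lambda^k/\rho^*$ gives $\langle w^k-\Lambda^k/\rho^*,\Lambda^k/\rho^*\rangle=0$. In the limit this becomes $\langle G(z^*),\Lambda^*\rangle=0$, and likewise $\langle H(z^*),\Gamma^*\rangle=0$.
\end{itemize}
These identities are exactly the Case 1 computation in the proof of Theorem \ref{thm1}.

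\textbf{Conclusion.} Thus $(\lambda^*,\mu^*,\eta^*,-\Lambda^*,-\Gamma^*)$ is a KKT multiplier for K-SOCMPCC at $z^*$, up to the sign convention for cone multipliers. Lemma \ref{lem1} then gives that $z^*$ is K-stationary.

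The main care point is the step of identifying limits along a common subsequence so that $\bar\Lambda^*=\Lambda^*$. If the equivalence in Lemma \ref{lem1} is applied literally, one must also check that the KKT system of K-SOCMPCC uses the multiplier convention matching $\Lambda^*\in{\cal K}^\circ$. If it does not, one can instead regroup terms index by index as in (\ref{4.10})--(\ref{4.11}) of the proof of Theorem \ref{thm3}, without needing nondegeneracy since the multipliers are already bounded.
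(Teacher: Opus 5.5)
Your proposal is correct and follows the same route as the paper. Bounded $\rho^k$ gives bounded multipliers; you pass to the limit in the approximate stationarity equation, verify the KKT system of K-SOCMPCC (using $\zeta_{k+1}\to0$ for $\lambda^*$ and $V^k\to0$ with Moreau's decomposition for $\langle\Lambda^*,G(z^*)\rangle=0$), and then invoke the KKT/K-stationarity equivalence. The only difference is cosmetic: you get the cone complementarity by taking limits in the Moreau orthogonality with $\bar\Lambda^*=\Lambda^*$, as in Case 1 of Theorem~\ref{thm1}, whereas the paper controls $\langle\Lambda^k,\Pi_{\cal K}(G(z^k))\rangle$ via nonexpansiveness of the projection and $\|\Lambda^k-\bar\Lambda^k\|=\rho^k\|V^k\|\to0$.
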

\begin{proof}
Since $\{\rho^k\}$ is bounded, there exists $k_0>0$ such that for all $k>k_0$, the parameter $\rho^{k+1}$ ceases to update.
Without loss of generality, by considering a subsequence if necessary, we may assume $\rho^{k+1}\to \rho^*$.
From Step 2 of Algorithm \ref{alg:socmpcc}, the definitions of $\lambda^{k+1}$, $\mu^{k+1}$, $\eta^{k+1}$, $\Lambda^{k}$, and $\Gamma^{k}$ ensure that these sequences are bounded. Thus, we may assume the following limits exist:
\[
\lambda^{k+1}\to \lambda^*, \quad \mu^{k+1}\to \mu^*, \quad \eta^{k+1}\to \eta^*, \quad \Lambda^{k}\to \Lambda^*, \quad \Gamma^{k}\to \Gamma^*.
\]

By the definition of ${\Lambda}^k$ and Lemma \ref{lema3}, $\langle{\Lambda}^k,\Pi_{{\cal K}}(\rho^k(G(z^k)+\bar{\Lambda}^k))\rangle=0$ holds for all $k$, which yields
\begin{align*}
\langle{\Lambda}^k,\Pi_{\cal K}(G(z^k))\rangle
= \tfrac{1}{\rho^k}\langle{\Lambda}^k,\Pi_{\cal K}(\rho^kG(z^k))\rangle
= \tfrac{1}{\rho^k}\langle{\Lambda}^k,\Pi_{\cal K}(\rho^kG(z^k))-\Pi_{\cal K}(\rho^kG(z^k)+\bar{\Lambda}^k)\rangle.
\end{align*}

Using the nonexpansiveness of the projection and Lemma \ref{lema3}, we obtain
\begin{align*}
\|\Pi_{\cal K}(\rho^kG(z^k))-\Pi_{\cal K}(\rho^kG(z^k)+\bar{\Lambda}^k)\|
&\leq \|\rho^kG(z^k)-\Pi_{\cal K}(\rho^kG(z^k)+\bar{\Lambda}^k)\| \\
&= \|\rho^kG(z^k)+\bar{\Lambda}^k-\Pi_{\cal K}(\rho^k(G(z^k)+\bar{\Lambda}^k))-\bar{\Lambda}^k\| \\
&= \|\Pi_{{\cal K}^\circ}(\rho^k(G(z^k)+\bar{\Lambda}^k))-\bar{\Lambda}^k\| \\
&= \|{\Lambda}^k-\bar{\Lambda}^k\|.
\end{align*}

Since ${\Lambda}^k-\bar{\Lambda}^k=\Pi_{{\cal K}^\circ}(\rho^kG(z^k)+\bar{\Lambda}^k)-\bar{\Lambda}^k=-\rho^kV^k\to 0$ and ${\Lambda}^k=\bar{\Lambda}^k-\rho^kV^k$ forms a bounded sequence, it follows that $\langle{\Lambda}^k,\Pi_{\cal K}(G(z^k))\rangle\to 0$. Consequently, $\langle{\Lambda}^*,G(z^*)\rangle=0$.
Similarly, we have $\langle{\Gamma}^*,H(z^*)\rangle=0$.
From the definitions of ${\Lambda}^k$ and ${\Gamma}^k$ in Algorithm \ref{alg:socmpcc}, we conclude ${\Lambda}^*,{\Gamma}^*\in{\cal K}^\circ$.

The point $z^*$ is feasible to SOCMPCC problem, meaning $g(z^*)\leq 0$.
From the definition of $\zeta^{k+1}$ in Algorithm \ref{alg:socmpcc} and the multiplier update rule, when $\{\rho^k\}$ is bounded, $\lambda^*=0$ whenever $g(z^*)<0$.
Thus, the complementarity condition $g(z^*)^T\lambda^*=0$ holds.

In summary, there exist multipliers $(\lambda^*,\mu^*,\Lambda^*,\Gamma^*,\eta^*)$ satisfying the following KKT conditions:
\begin{equation}\label{KKT}
\begin{cases}
\nabla f(z^*)+\nabla g(z^*)\lambda^*+\nabla h(z^*)\mu^*+\sum\limits_{i=1}^J\nabla G_i(z^*)\Lambda_i^*+\sum\limits_{i=1}^J\nabla H_i(z^*)\Gamma_i^*+\nabla(G^TH)(z^*)\eta^*=0 \\
\lambda^*\geq 0,\ g(z^*)^T\lambda^*=0, \\
G_i(z^*)\in {\cal K}_i, -\Lambda^*_i\in {\cal K}_i, G_i(z^*)^T\Lambda^*_i=0, \quad i=1,\cdots,J, \\
H_i(z^*)\in {\cal K}_i, -\Gamma^*_i\in {\cal K}_i, H_i(z^*)^T\Gamma^*_i=0, \quad i=1,\cdots,J, \\
\eta^*\geq 0.
\end{cases}
\end{equation}

As shown by Ye et al. \cite{Ye JJ Zhou J}, the conditions (\ref{KKT}) represent the classical KKT conditions of K-SOCMPCC. Moreover, they established the equivalence between the K-stationarity conditions and the classical KKT conditions (\ref{KKT}). Therefore, when $\{\rho^k\}$ is bounded, $z^*$ is a K-stationary point of SOCMPCC.
This completes the proof.
\end{proof}

\section{Numerical experiments}

In this section, we present numerical results obtained by implementing Algorithm \ref{alg:socmpcc}.
All computations were carried out in MATLAB R2021b on a personal computer equipped with an 11th Gen Intel(R) Core(TM) i5-1135G7 CPU at 2.40 GHz and 16 GB of memory.
Throughout the experiments, ${\cal K}^m$ denotes the $m$-dimensional second-order cone.

The following parameters are used in Algorithm \ref{alg:socmpcc}:
$
\tau=3,\ \sigma=0.2,\ \rho^0=2,\ \lambda_{\max}=10^{20},\ \mu_{\max}=10^{20},\ \mu_{\min}=-10^{20},\ \eta_{\max}=10^{20},\ \eta_{\min}=-10^{20},\ {\varepsilon=10^{-6}}.
$
{ In all experiments, we set a lower bound of $10^{-6}$ for $\epsilon_k$, meaning that $\epsilon_k\geq 10^{-6}$.}
The initial multipliers are chosen as $\bar\lambda^0=1$, $\bar\mu^0=1$, $\bar\eta^0=1$, $\bar\Lambda^0=(-1,1,0)$, and $\bar\Gamma^0=(-1,-1,0)$.
For Examples \ref{ex 1} and \ref{ex 5}, we set $\bar\Lambda^0=(-1,1)$ and $\bar\Gamma^0=(-1,-1)$.
In Example \ref{ex 5}, since the inequality constraint is two-dimensional, we take $\bar\lambda^0=(1,1)$.

%%%%%%%%%%%%%%%%
\begin{ex}\label{ex 1}\rm
Consider the SOCMPCC:
\begin{eqnarray*}
\min   & & f(z)=-z_1+z^2_2 \\
\mbox{\rm s.t.} && G(z)=\begin{pmatrix}z_2+1\\z_1+1\\ \end{pmatrix}\in {\cal K}^2,~
H(z)=\begin{pmatrix}z_2\\z_1\\ \end{pmatrix}\in {\cal K}^2,\\
&&G(z)^T H(z)=0.\nonumber
\end{eqnarray*}
The optimal solution is $z^* = (0, 0)^\top$, with optimal value $f(z^*) = 0$.
\end{ex}

%%%%%%%%%%%%%%%
\begin{ex}{\rm \cite{Y.-C. Liang Y.-W. Liu}}\rm\label{ex 3}
Consider the SOCMPCC:
\begin{eqnarray*}
\min   & & f(z)=z^2_1+z^2_2+z^2_3 \\
\mbox{s.t.}
&& G(z)=\begin{pmatrix}-z_1+1\\z_2+1\\z_3\\ \end{pmatrix}\in {\cal K}^3,~
 H(z)=\begin{pmatrix}z_1+1\\z_2-1\\z_3\\ \end{pmatrix}\in {\cal K}^3,\\
&&G(z)^T H(z)=0.\nonumber
\end{eqnarray*}
The optimal solution is $z^* = (0, 0, 0)^\top$, with optimal value $f(z^*) = 0$.
\end{ex}

%%%%%%%%%%%%%%%
\begin{ex}\rm\cite{Zhang Wu Zhang}\label{ex 4}
Consider the SOCMPCC:
\begin{eqnarray*}
\min   & & f(z)=2z_1 - 3z_3 \\
\mbox{\rm s.t.}
&& G(z)=\begin{pmatrix}1 + z_1 \\z_1 - z_2 - 2z_3 + 1 \\z_3 - 3\end{pmatrix}\in \mathcal{K}^3,~
 H(z) = \begin{pmatrix}z_1 \\z_2 \\z_3\end{pmatrix} \in \mathcal{K}^3,\\
&&G(z)^T H(z)=0.\nonumber
\end{eqnarray*}
The optimal solution is $z^* = (1, 0, 1)^\top$, with optimal value $f(z^*) = -1$.
\end{ex}

%%%%%%%%%%%%%%%
\begin{ex}\rm\cite{Zhang Wu Zhang}\label{ex 5}
Consider the SOCMPCC:
\begin{eqnarray*}
\min   & & f(z) = z_1 + z_2 - z_3 \\
\mbox{\rm s.t.}
&& g(z) = \begin{pmatrix} z_2 \\ z_3 \end{pmatrix} \leq 0, \\
&& G(z) =\begin{pmatrix} z_1 - z_3 + 1 \\ -z_2 - 1 \end{pmatrix}\in \mathcal{K}^2,~
 H(z) = \begin{pmatrix} z_2 \\ 2z_1 \end{pmatrix} \in \mathcal{K}^2,\\
&& G(z)^T H(z)=0.\nonumber
\end{eqnarray*}
The optimal solution is $z^* = (0, 0, 0)^\top$, with optimal value $f(z^*) = 0$.
\end{ex}

To evaluate the performance of Algorithm \ref{alg:socmpcc}, we compare the proposed augmented Lagrangian method with several smoothing methods. Specifically, SM1 denotes the method in \cite{zhu-pang-lin}, while SM2--SM5 correspond to smoothing functions proposed in \cite{Liang-Xia}. The initial smoothing parameter is set to $t_0=10^{-2}$ and updated via $t_{k+1}=0.1t_k$.

The computational results are summarized in Tables \ref{tab1}--\ref{tab4}.

%%%%%%%%%%%%%%%%%%%%%%%%%%%%%%%%%

\FloatBarrier
\begin{table}[!htp]
    \centering
    \caption{Computational results of Example \ref{ex 1}}
    \begin{adjustbox}{width=\linewidth}
    \begin{tabular}{cccccccc}
       \toprule
        & $z^k$ & $f(z^k)$ & $\|z^k-z^*\|$ & $|f(z^k)-f(z^*)|$ & times(s)& $z^*$ & $f(z^*)$   \\
         \midrule
        ALM &1.0e-06$\ *\begin{pmatrix}-0.0988 \\0.1279\end{pmatrix}$ & 9.8831e-08 & 1.6166e-07 & 9.8831e-08 & 0.0370&{\multirow{10}{*}{$\begin{pmatrix} 0\\0 \end{pmatrix}$}}&{\multirow{10}{*}0}   \\

        SM1 & 1.0e-06$\ *\begin{pmatrix} -0.5000 \\0.5000\end{pmatrix}$&
        5.0002e-07 & 7.0713e-07 & 5.0002e-07 & 0.0406  \\

        SM2 & 1.0e-06$\ *\begin{pmatrix} -0.1499 \\ -0.0501\end{pmatrix}$&
        1.4987e-07 & 1.5803e-07 & 1.4987e-07 & 0.0469   \\

        SM3 & 1.0e-06$\ *\begin{pmatrix}  -0.1501 \\ -0.0499\end{pmatrix}$&
        1.5006e-07 & 1.5815e-07 & 1.5006e-07 & 0.0540  \\

        SM4 & 1.0e-06$\ *\begin{pmatrix}  -0.1497 \\ -0.0503\end{pmatrix}$&
        1.4975e-07 & 1.5795e-07 & 1.4975e-07 & 0.0403\\

        SM5 & 1.0e-06$\ *\begin{pmatrix}  -0.1500 \\ -0.0500\end{pmatrix}$&
        1.4998e-07 & 1.5810e-07 & 1.4998e-07 & 0.0338 \\
 \bottomrule
 \end{tabular}\label{tab1}
 \end{adjustbox}
 \end{table}

\FloatBarrier
\begin{table}[!htp]
    \centering
    \caption{Computational results of Example \ref{ex 3}}
    \begin{adjustbox}{width=\linewidth}
    \begin{tabular}{cccccccc}
       \toprule
        & $z^k$ & $f(z^k)$ & $\|z^k-z^*\|$ & $|f(z^k)-f(z^*)|$ & times(s) & $z^*$ & $f(z^*)$   \\
         \midrule
        ALM & 1.0e-08$\ *\begin{pmatrix}  -0.1036\\0.0885 \\-0.1416\end{pmatrix}$&
         3.8613e-18 & 1.9650e-09  & 3.8613e-18 & 0.0060&{\multirow{14}{*}{$\begin{pmatrix} 0\\0\\0 \end{pmatrix}$}}&{\multirow{14}{*}0}   \\

        SM1 & 1.0e-08$\ *\begin{pmatrix}  0.5182\\ -0.5181  \\ -0.0000\end{pmatrix}$& 5.3693e-17 & 7.3276e-09 & 5.3693e-17 & 0.0284   \\

        SM2 & $\begin{pmatrix}1.0000 \\-1.0000 \\ -0.0000
          \end{pmatrix}$& 2.0000 & 1.4142 & 2.0000 & 0.0299   \\

        SM3 & $\begin{pmatrix}1.0000 \\-1.0000 \\ 0.0000
          \end{pmatrix}$& 2.0000 & 1.4142 & 2.0000 & 0.0298  \\

        SM4 & $\begin{pmatrix}1.0000 \\-1.0000 \\   -0.0000
          \end{pmatrix}$& 2.0000 & 1.4142 & 2.0000 &  0.0407\\

        SM5 & $\begin{pmatrix}1.0000 \\-1.0000\\0.0000
          \end{pmatrix}$& 2.0000 & 1.4142 & 2.0000 & 0.0309 \\
 \bottomrule
 \end{tabular}\label{tab2}
 \end{adjustbox}
 \end{table}

\FloatBarrier
\begin{table}[!htp]
    \centering
    \caption{Computational results of Example \ref{ex 4}}
    \begin{adjustbox}{width=\linewidth}
    \begin{tabular}{c>{\centering\arraybackslash}p{3.5cm}>{\centering\arraybackslash}p{3cm}ccccc}
       \toprule
        & $z^k$ & $f(z^k)$ & $\|z^k-z^*\|$ & $|f(z^k)-f(z^*)|$ & times(s) & $z^*$ & $f(z^*)$   \\
         \midrule
        ALM & $\begin{pmatrix}1.0000\\0.0000\\1.0000\end{pmatrix}$&
        -1.0000 &  3.6504e-08   & 3.3493e-08  &  0.5996&{\multirow{14}{*}{$\begin{pmatrix} 1\\0\\1 \end{pmatrix}$}}&{\multirow{14}{*}{-1}}   \\

        SM1 & $\begin{pmatrix}1.0000\\-0.0000\\1.0000\end{pmatrix}$&
        -1.0000 &  4.1458e-07   & 3.7500e-07   &  0.6982   \\

        SM2 & $\begin{pmatrix}1.0000 \\ 0.0000 \\1.0000\end{pmatrix}$&
        -1.0000 & 5.9161e-07 & 9.0000e-07   &  0.1296   \\

        SM3 & $\begin{pmatrix}1.0000 \\0.0000 \\1.0000\end{pmatrix}$&
        -1.0000 & 5.9161e-07 & 9.0000e-07 & 0.0385  \\

        SM4 & $\begin{pmatrix}1.0000 \\0.0000 \\1.0000\end{pmatrix}$&
        -1.0000 & 5.9161e-07 & 9.0000e-07 &   0.0415\\

        SM5 & $\begin{pmatrix}1.0000 \\0.0000 \\1.0000\end{pmatrix}$&
        -1.0000 & 5.9161e-07 & 9.0000e-07 & 0.0450 \\
 \bottomrule
 \end{tabular}\label{tab3}
 \end{adjustbox}
 \end{table}

\FloatBarrier
\begin{table}[!htp]
    \caption{Computational results of Example \ref{ex 5}}
    \centering
    \begin{adjustbox}{width=\linewidth}
    \begin{tabular}{cccccccc}
     \toprule
       & $z^k$ & $f(z^k)$ & $\|z^k-z^*\|$ & $|f(z^k)-f(z^*)|$ & times(s) & $z^*$ & $f(z^*)$   \\
         \midrule
      ALM & 1.0e-08$\ *\begin{pmatrix} 0.0644\\-0.1932 \\0.2211\end{pmatrix}$&
        -3.4991e-09&  3.0061e-09 &  3.4991e-09& 0.0472&{\multirow{16}{*}{$\begin{pmatrix} 0\\0\\0 \end{pmatrix}$}}&{\multirow{16}{*}0}   \\

        SM1 & 1.0e-04$\ *\begin{pmatrix}  -0.0000 \\  -0.0000 \\-0.2063\end{pmatrix}$&  2.0628e-05 &  2.0628e-05 & 2.0628e-05 & 0.4661  \\

        SM2 & 1.0e-06$\ *\begin{pmatrix} -0.0007\\-0.0058 \\-0.1534\end{pmatrix}$&
        1.4698e-07 & 1.5351e-07 & 1.4698e-07 & 0.1570  \\

        SM3 &1.0e-05$\ *\begin{pmatrix} -0.0000\\ -0.0000 \\-0.8903\end{pmatrix}$&
        8.9032e-06 & 8.9033e-06 & 8.9032e-06 & 5.1042\\

        SM4 &1.0e-07$\ *\begin{pmatrix}-0.0030\\-0.0289 \\-0.2998\end{pmatrix}$&
        2.6792e-08 & 3.0115e-08 &  2.6792e-08 & 0.1735 \\

        SM5 & 1.0e-05$\ *\begin{pmatrix}0.0051\\-0.0098\\-0.2046\end{pmatrix}$&
        1.9992e-06 & 2.0488e-06 & 1.9992e-06 & 0.1268\\
 \bottomrule
 \end{tabular}\label{tab4}
 \end{adjustbox}
 \end{table}
 \FloatBarrier

Tables \ref{tab1}--\ref{tab4} report the numerical results with four-digit precision. In these tables, $z^*$ and $f(z^*)$ denote the true optimal solution and optimal value, respectively, while $z^k$ and $f(z^k)$ represent the computed solution and objective value. The quantities $\|z^k - z^*\|$ and $|f(z^k) - f(z^*)|$ measure the absolute errors in the solution and objective value, respectively.

From the tables, it can be observed that both the smoothing methods and the proposed ALM are capable of obtaining solutions close to the true optimum. However, across all test instances, the ALM consistently yields smaller absolute errors in both the decision variables and objective values. This indicates that the proposed method achieves higher solution accuracy and produces results closer to the theoretical optimum.

For a more intuitive comparison, Fig. \ref{fig1} illustrates the absolute errors (on a logarithmic scale) for different methods. Lower values correspond to better performance, and the figure clearly shows the advantage of the proposed ALM in terms of accuracy.

\FloatBarrier
\begin{figure}[htbp]
 \centering
\includegraphics[width=\textwidth]{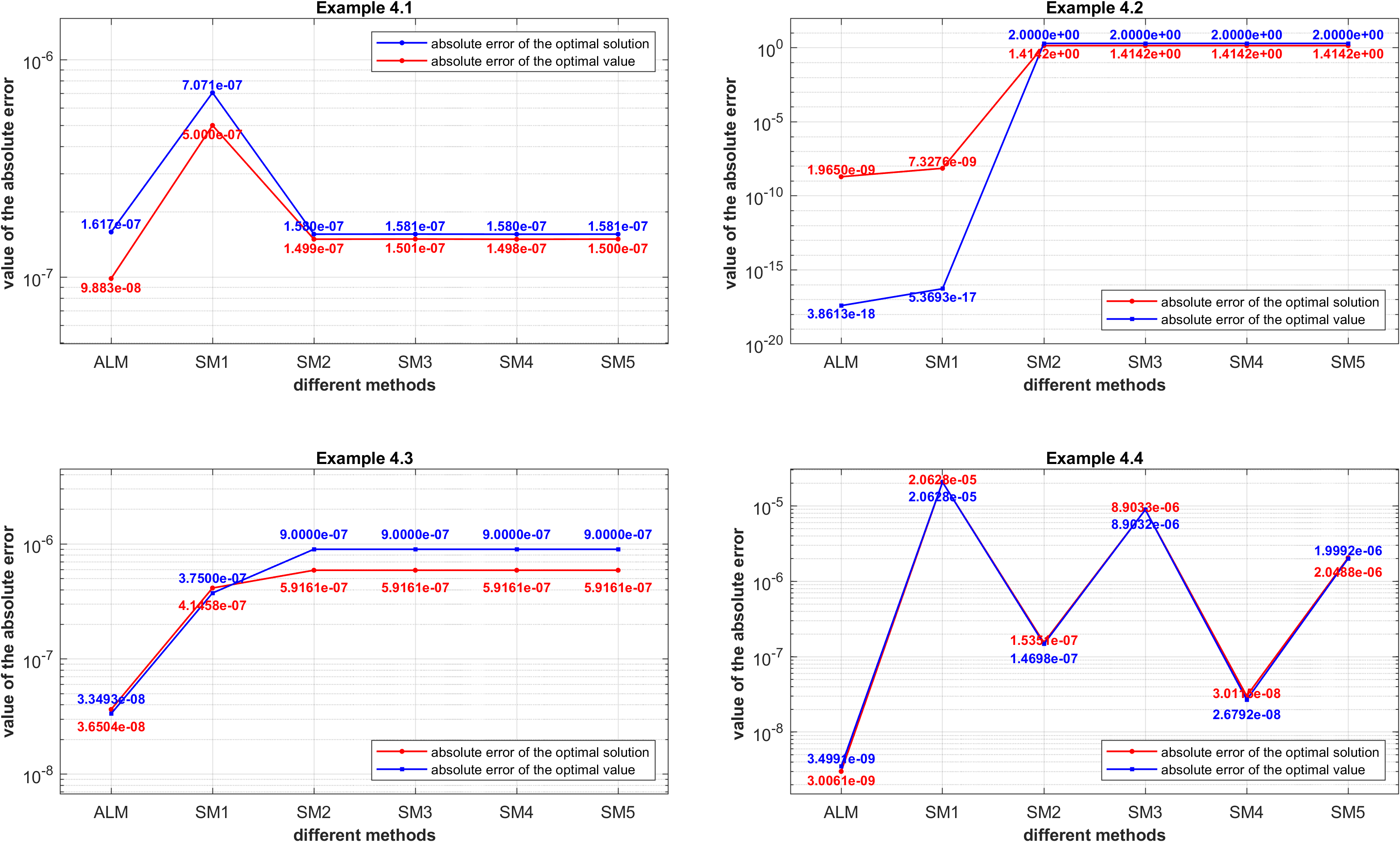}
 \caption{\scriptsize Comparison of Absolute Errors between the Optimal Solution and the Optimal Value (Logarithmic Scale)}
 \label{fig1}
\end{figure}
\FloatBarrier

%%%%%%%%%%%%%%%%%%%%%%%%%%%

To further assess scalability, we apply the proposed method to high-dimensional SOCMPCCs.

\begin{ex}\rm\cite{Yamamura Okuno Hayashi Fukushima } \label{high-exam}
Consider the SOCMPCC:
\begin{eqnarray*}
\min   & & \|x\|^2 + \|y\|^2 \\
\mbox{\rm s.t.}
&& Ax \le b,\\
&& z = Nx + My + q,\\
&& \mathcal{K} \ni y \perp z \in \mathcal{K},
\end{eqnarray*}
where $(x,y,z) \in \mathbb{R}^{10} \times \mathbb{R}^m \times \mathbb{R}^m$. The entries of $A$ and $N$ are randomly generated from $[-1,1]$, while $b$ is sampled from $[0,1]$. The matrix $M$ is symmetric positive definite, constructed as $M = M_1 M_1^\top + 0.01I$, where $M_1$ has entries drawn from $[-1,1]$. The vector $q$ is defined by $q := \xi_z - M\xi_y$, where $\xi_y$ and $\xi_z$ are randomly generated from $[-1,1]$.
\end{ex}

We initialize $w^0=(x^0,y^0,z^0)=(0,\xi_y,\xi_z)$, which satisfies the constraints. The parameters are chosen as $\bar\lambda^0=(1,\dots,1)$, $\bar\mu^0=(1,\dots,1)$, $\bar\eta^0=1$, $\bar\Lambda^0=(-1,1,0,\dots,0)$, and $\bar\Gamma^0=(-1,-1,0,\dots,0)$. The smoothing parameter is initialized at $t_0=10^{-2}$ and updated by $t_{k+1}=0.5t_k$.

In Algorithm \ref{alg:socmpcc}, we focus on finding stationary points of the augmented Lagrangian function. For each fixed $m$, we conducted 30 independent experiments. In the experiment, we focused on recording data related to two core indicators: First, the convergence status of the augmented Lagrangian method and the smoothing methods (see Fig. \ref{fig2}); Second, under the condition that the augmented Lagrangian method converges, the cases where its optimal value is smaller than that of the smoothing method (see Fig. \ref{fig3}).
The results indicate that, as the problem dimension increases, the success rate of the smoothing methods decreases significantly, while the ALM maintains a much higher convergence rate. Moreover, among the convergent runs, the ALM consistently produces smaller objective values.

Figs. \ref{fig2}--\ref{fig3} show the convergence counts and the convergence rates for $m=20$, $m=30$,  $m=50$, $m=60$ respectively.
Although the success rate of Algorithm \ref{alg:socmpcc} decreases slightly as the dimension increases, it still demonstrates clear advantages in terms of both convergence behavior and solution quality.

\FloatBarrier
\begin{figure}[htbp]
 \centering
\includegraphics[width=\textwidth]{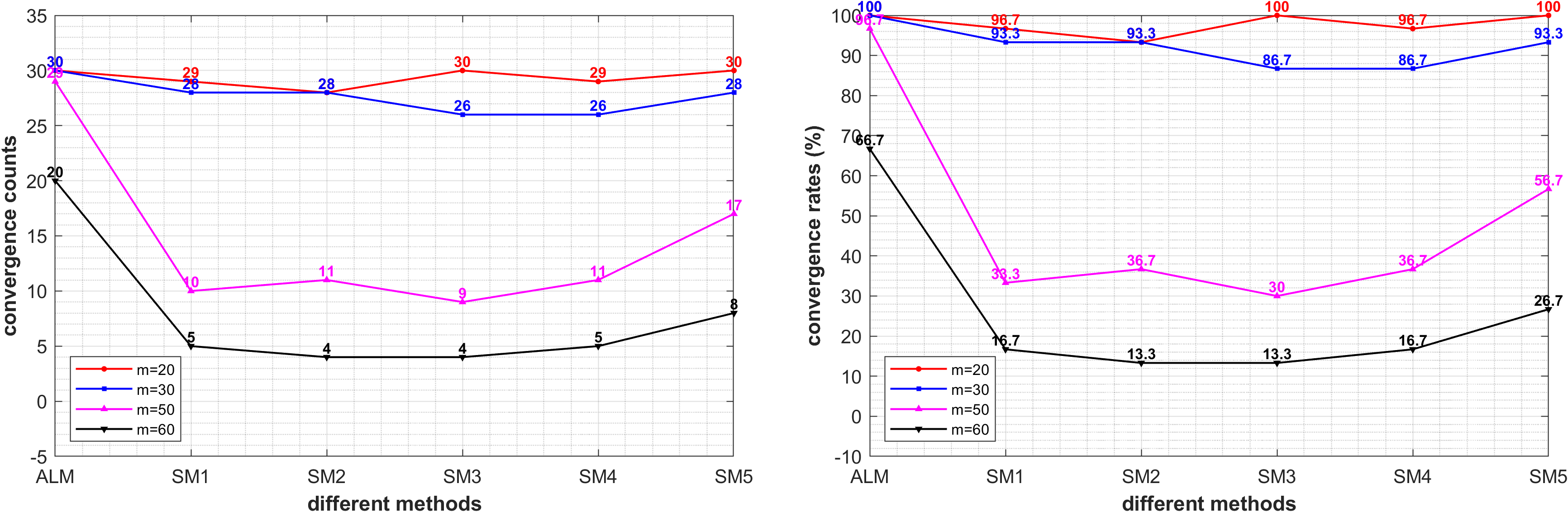}
 \caption{\scriptsize Convergence counts and convergence rates for different methods}
 \label{fig2}
\end{figure}
\FloatBarrier

Fig. \ref{fig3} records the count and proportion of experiments in which the augmented Lagrangian method converged and achieved a smaller optimal value than the smoothing method across different experimental dimensions.

\FloatBarrier
\begin{figure}[htbp]
 \centering
\includegraphics[width=\textwidth]{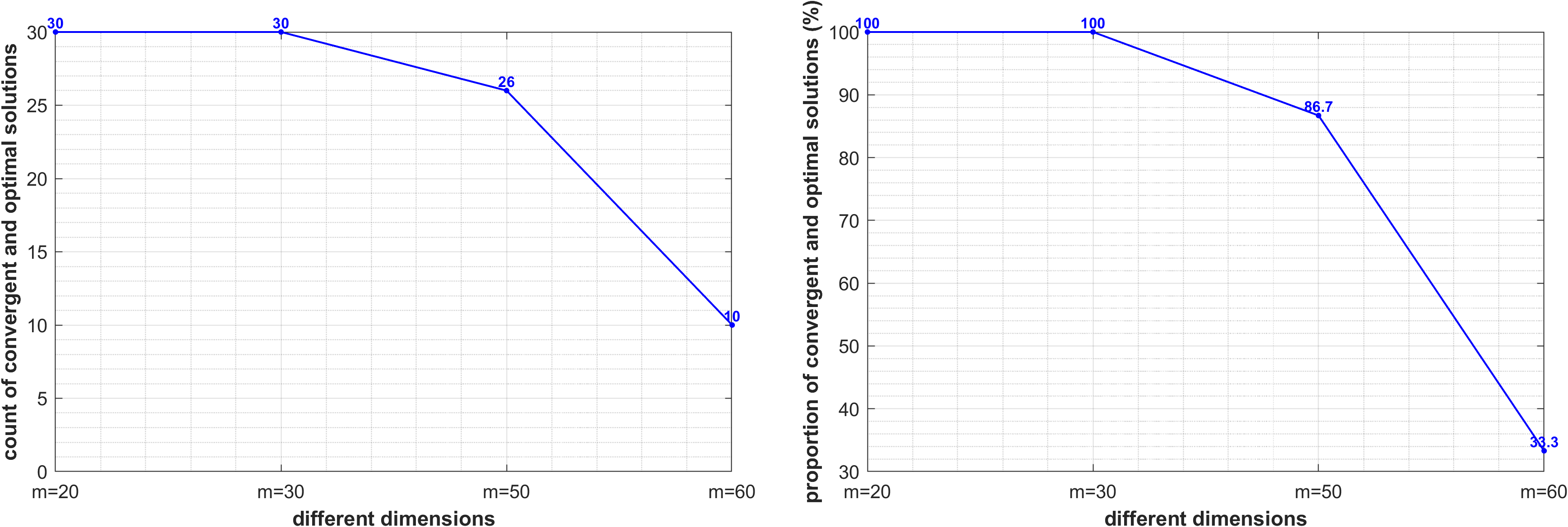}
 \caption{\scriptsize Count and proportion of ALM with convergence and optimum in different dimensions}
 \label{fig3}
\end{figure}
\FloatBarrier

In summary, the numerical experiments demonstrate that Algorithm \ref{alg:socmpcc} reliably converges to feasible solutions, achieves high approximation accuracy, and maintains competitive computational efficiency across a range of problem settings. These results confirm the effectiveness and robustness of the proposed augmented Lagrangian framework for solving SOCMPCCs.

\section{Conclusions}

Mathematical programs with second-order cone complementarity constraints (SOCMPCCs) present significant theoretical and computational challenges due to the inherent failure of standard constraint qualifications, such as Robinson's constraint qualification, at all feasible points. This failure complicates the direct application of classical nonlinear programming theories and necessitates specialized approaches for both theoretical analysis and numerical algorithms. While existing research has predominantly focused on theoretical advancements, including the characterization of normal cones, C/M/S/K-stationary conditions, and various constraint qualifications, numerical methods for SOCMPCCs remain underdeveloped and largely unexplored.

In this study, we propose the augmented Lagrangian method (ALM) for SOCMPCCs. Our primary contributions and findings are summarized as follows.
We develop a tailored ALM algorithm framework specifically adapted to SOCMPCCs. By leveraging the matrix-free implementation and local convergence guarantees of ALM, our method effectively handles the intricate interplay of complementarity constraints and second-order cone constraints.
Under bounded ALM penalty parameter settings, we prove that the sequences generated by the algorithm converge to feasible points of the SOCMPCC. Furthermore, by introducing SOCMPCC-nondegeneracy conditions or assuming the boundedness of the penalty parameter sequence, we establish convergence to K-stationary points, a critical optimality condition for SOCMPCCs. These results extend the classical convergence theory of ALM to the non-smooth and non-convex setting of SOCMPCCs.
Through comprehensive numerical experiments, we demonstrate the practical effectiveness of the proposed algorithm. The results confirm that our ALM framework achieves feasible solutions with high accuracy and robustness, even for problems where traditional methods struggle due to constraint qualification failures.

Our study bridges the gap between theoretical advancements and numerical solvers for SOCMPCCs. By integrating insights from Jordan algebra techniques and modern ALM variants, we provide a unified approach that combines theoretical rigor with computational practicality. Future directions include extending the framework to handle higher-dimensional cones, refining nondegeneracy conditions, and exploring applications in engineering and economic systems.

\section*{Conflict of Interest}
The authors declare that they have no conflicts of interest.

\end{document}